\documentclass{article}
\usepackage{amsmath, amsthm, amsfonts, amssymb}
\usepackage{hyperref}
\usepackage{wasysym}
\newtheorem{theorem}{Theorem}[section]

\theoremstyle{definition}
\newtheorem{definition}[theorem]{Definition}
\newtheorem{example}[theorem]{Example}

\theoremstyle{remark}
\newtheorem{remark}[theorem]{Remark}
\numberwithin{equation}{section}
%\usepackage{showkeys}

%--------------------------------------------------new commands-------------------------------------------%

\newcommand{\oa}{\left\{}
\newcommand{\fa}{\right\}}

\renewcommand{\exp}[1]{\textrm{exp}\oa #1 \fa}

%

%

%

%

%

%

%-----------------------------------------------------------------------------------------------------------------%

\begin{document}
\title{\bf Hyperbolic valued random variables and conditional expectation}
\date{\textbf{Romesh Kumar and Kailash Sharma}}
\vspace{0in}
\maketitle

$\textbf{Abstract.}$ In this paper, we introduce the concept of hyperbolic valued random variables, their expectation and moments.We develop the hyperbolic analogue of Binomial and Poisson distributions. We study some of the properties of expectation on  the basis of decomposition of a hyperbolic number into idempotent components. Finally we define conditional expectation of hyperbolic valued random variables and study some of its basic properties. Our random variable can take values which are zero divisors and this is the important part of this study. \\\\
 $\textbf{Keywords.}$  $\mathbb{D}$-probabilistic space, hyperbolic valued random variables, expectation, moments,  moment generating function, Binomial distribution, conditional expectation, partition.

%%%%%%%%%%%%%%%%%%%%%%%%%%%%%%%%%%%%%%%%%
%%%%%%%%%%%%%%%%%%%%%%%%%%%%%%%%%%%%%%%%
%%%%%%%%%%%%%%%%%%%%%%%%%%%%%%%%%%%%%%%
\begin{section} {Introduction}
The hyperbolic numbers in the mathematical literature has been called with different names: split-complex numbers, double numbers, perplex numbers and duplex numbers. The set $\mathbb{D}$ of hyperbolic numbers has a partial order relation $\preceq $ which is one of the interesting property of the set $\mathbb{D}.$ We can think of random variables taking values in the set of hyperbolic numbers being motivated by the work of D. Alpay, M.E. Luna-Elizarraras and M. Shapiro [3].\\
We start with a $\mathbb{D}$-probabilistic space $(\Omega, \Sigma, P_{\mathbb{D}}),$ where $ \Omega$ is the set of all possible outcomes of the experiment, $\Sigma $ is the set of all events and $ P_{\mathbb{D}} $ is the on  $\mathbb{D}$-valued probabilistic measure on $\Sigma $ measuring the $\mathbb{D}$-probabilities of events. A $\mathbb{D}$- random variable is a function $$ X_{\mathbb{D}}\; \colon \Omega \longrightarrow \mathbb{D} $$ such that $$ X_{\mathbb{D}}^{-1}(D)\in \Sigma $$ for every open set D  in $\mathbb{D}.$ The expectation of $ X_{\mathbb{D}}$ is defined as $$E(X_{\mathbb{D}})=\int_{\Omega}X_{\mathbb{D}}\; dP_{\mathbb{D}},$$ provided the integral exists. We introduce the conditional expectation of $ X_{\mathbb{D}}$ given an event B with $ P_{\mathbb{D}}(B)$ taking the values which are zero divisors or not zero divisors. The values taken by $ P_{\mathbb{D}}(B)$ which are zero divisors are of special interest in this work. This work can be seen as a continuation of work of Daniel Alpay et.al. \cite {DA Kolax}. It seems that the whole probability theory can be generalized in this direction.This work may have important applications in mathematical statistics, thermodynamics and statistical physics see \cite {DA Kolax}. 
\end{section}

\begin{section} {A review of hyperbolic numbers}
The ring of hyperbolic numbers is the commutative ring $\mathbb{D}$ defined as $$\mathbb{D}= \left\{a+bk\;|\;k^{2}=1,k\notin\mathbb{R};a,b\in\mathbb{R}\right\}.$$  The $ \dagger $-conjugation of a hyperbolic number $ z=a+bk$ is given by $$ z^{\dagger}= a-bk$$ which is additive, involutive and multiplicative operation on $\mathbb{D}$.\;Note that given  $ z= a+bk\in\mathbb{D},$ then $zz^{\dagger}=a^{2}-b^{2}\in \mathbb{R},$ from which it follows that any hyperbolic number z with $zz^{\dagger}\neq0$ is invertible, and its inverse is given by $$ z^{-1}=\dfrac{z^{\dagger}}{zz^{\dagger}}.$$If, on the other hand, $z\neq0$ but $zz^{\dagger}=a^{2}-b^{2}=0$, then z is a zero divisor. In fact there are no other zero divisors.Thus the set of zero divisors, denoted by $\mathfrak{S}_{\mathbb{D}}$, is
\begin{equation*}\label{xx}
\begin{split}
\mathfrak{S}_{\mathbb{D}}& = \left\{ z= a+bk \; | \; z \neq 0, \;  \; zz^ {\dagger} = a^{2}-b^{2}=0 \right\}   \\
& = \left\{ z = a( 1\pm k)\; |\; a \neq 0 \in \mathbb{R}  \right\}\\
\end{split} 
\end{equation*}
There are two very special zero divisors in $ \mathbb{D}$ which are $$e=\frac{1+k}{2}$$ and its   $ \dagger $-conjugation\;$$ e^{\dagger}=\frac{1-k}{2}.$$
These are mutually complementary idempotent elements in $\mathbb{D}$.
The two sets $\mathbb{D}_{e} =e\;\mathbb{D}$ and $\mathbb{D}_{e^{\dagger}} = e^{\dagger}\;\mathbb{D}$ are principal ideals in the ring $\mathbb{D}$ such that $\mathbb{D}_{e}\cap \mathbb{D}_{e^{\dagger}} =\left\lbrace0\right\rbrace$ and $\mathbb{D}= \mathbb{D}_{e}+ \mathbb{D}_{e^{\dagger}}, $ which is idempotent decomposition of  $ \mathbb{D}.$ Every hyperbolic number $z=a+bk$ can be written as $z=a+bk=(a+b)e+(a-b)e^{\dagger}=v_{1}e +v_{2}e^{\dagger},$ which is the idempotent decomposition of a hyperbolic number.
The algebraic operations of addition, multiplication, taking of inverse, etc. can be performed component-wise.
Observe that the sets $\mathbb{D}_{e}$ and $\mathbb{D}_{e^{\dagger}}$ can be written as $\mathbb{D}_{e}$ =$\left\lbrace r\;e\;|\; r \in \mathbb{R}\right\rbrace$ =$ \mathbb{R}\;e$ and $\mathbb{D}_{e^{\dagger}}$ =$\left\lbrace t\;e^{\dagger}\;|\; t \in \mathbb{R}\right\rbrace$ =$ \mathbb{R}\;e^{\dagger}.$
The set $\mathbb{D}$ of hyperbolic numbers is a vector space over the field $\mathbb{R}$ of real numbers with basis$ \left\lbrace e,\;e^{\dagger}\right\rbrace$ which is isomorphic to the linear space of complex numbers over the field $ \mathbb{R}$ of real numbers.
The set of non negative hyperbolic numbers is $$\mathbb{D}^{+} =\left\lbrace v_{1}\;e +v_{2}\;e^{\dagger}\;|\;v_{1},v_{2}\geq0\right\rbrace.$$ \\
We will need two more sets \\
$$\mathbb{D}_{e}^{+}= \mathbb{D}_{e}\cap \mathbb{D}^{+}-\left\lbrace 0 \right\rbrace $$ and $$\mathbb{D}_{e^{\dagger}}^{+} = \mathbb{D}_{e^{\dagger}}\cap \mathbb{D}^{+} - \left\lbrace 0 \right\rbrace.$$\\
Given $ z_{1},z_{2}\in \mathbb{D}$, we write $z_{1}\preceq z_{2}\; whenever\; z_{2}-z_{1}\; \in \mathbb{D}^{+}.$This relation is a partial order relation in $\mathbb{D}$ which is an extension of total order relation $ \leq $ on $\mathbb{R}.$\\
Given any hyperbolic number $ \alpha, $ one can see that the entire hyperbolic plane is divided into four quarters: the quarter plane of hyperbolic numbers which are $\mathbb{D}$- less than or equal to $ \alpha; $ the quarter plane of hyperbolic numbers which are $\mathbb{D}$- greater than $ \alpha; $ and the two quarter planes where the hyperbolic numbers are not $\mathbb{D}$- comparable with $ \alpha. $ Let us denote by $ A_{\alpha}, $ the set of all hyperbolic numbers which are not $\mathbb{D}$- comparable with $ \alpha. $ Then $$ \mathbb{D} = \left\lbrace {z \in \mathbb{D}\; | \; z \preceq \alpha} \right\rbrace \cup \left\lbrace {z \in \mathbb{D}\; | \; z \succ\alpha } \right\rbrace \cup A_{\alpha}. $$    
The hyperbolic valued modulus on $\mathbb{D}$ is defined by $$ |z|_{k} = | v_{1}e +v_{2}e^{\dagger}|_{k} = |v_{1}|e + |v_{2}|e^{\dagger} \in \mathbb{D}^{+},$$ where $|v_{1}|,|v_{2}|$ denote the usual modulus of real numbers. The set $\mathbb{D}$ forms a normed linear space with respect to the hyperbolic valued norm(modulus).        
We say that a subset $ A \subset \mathbb{D}$ is a $\mathbb{D}$- bounded set if there exists M $\in\mathbb{D}^{+}$ such that $ |z|_{k}\preceq M, \forall z\in A. $\\
Let\\
         $$ A_{1} =\left\{x \in \mathbb{R}\;|\; \exists y \in \mathbb{R}, xe + ye^{\dagger} \in A  \right\},$$ and
         $$ A_{2} =\left\{y \in \mathbb{R}\;|\; \exists x \in \mathbb{R}, xe + ye^{\dagger} \in A  \right\}$$\\
         If A is a bounded set, then both $ A_{1}$ and $A_{2}$ are bounded and $$Sup_{\mathbb{D}} A = e\; Sup A_{1} + e^{\dagger}\; Sup A_{2}.$$ For details on hyperbolic numbers, we refer to \cite{DA} and \cite {MELMSbicomhol}.
 \begin{definition} \cite [page 5] {DA Kolax}\label{def1}
Let $(\Omega,\Sigma)$ be a measurable space. A function $$ P_{D}\; \colon \Sigma \longrightarrow  \mathbb{D}$$ is called a $\mathbb{D}$-valued probabilistic measure or a $\mathbb{D}$-valued probability if  
\begin{enumerate}
\item[(i)]$ P_{\mathbb{D}}(A)\succeq0;$
\item[(ii)] $P_{\mathbb{D}}(\Omega)= p $, where p takes one of the three possible values 1,\;e or  $e^{\dagger};$
\item[(iii)] given a sequence $\left\lbrace{A_{n}}\right\rbrace\subset\Sigma$ of pairwise disjoint events,\\
\begin{equation*}
P_{\mathbb{D}}\left({\bigcup}^\infty_{n=1}\right) = \sum^{\infty}_{n=1}P_{\mathbb{D}}(A_{n}).
\end{equation*}
\end{enumerate}
\end{definition}
The triplet  $(\Omega,\Sigma,P_{\mathbb{D}})$ is called a $\mathbb{D}$- probabilistic space.\\
Every $\mathbb{D}$-valued probabilistic measure can be written as\\
 $$P_{\mathbb{D}}(A)= p_{1}(A) + p_{2}(A)k = P_{1}(A)e + P_{2}(A)e^{\dagger}$$ with $$P_{1}(A)= p_{1}(A) + p_{2}(A);P_{2}(A)= p_{1}(A) - p_{2}(A).$$\\
 In fact, to define a $\mathbb{D}$-valued probabilistic measure is equivalent to consider on the same measurable space, a pair of usual $\mathbb{R}$-valued probabilistic measures.
  \begin{definition} \cite {DA Kolax}\label{def12}
 Let $(\Omega,\Sigma,P_{\mathbb{D}})$ be a $\mathbb{D}$-probabilistic space and A,\;B be two events. The conditional probability $ P_{\mathbb{D}}(A/B)$ of event A under the condition that B has happened is defined as 
 $$P_{\mathbb{D}}(A/B) = \left\{ \begin{array}{l}
\dfrac{P_{\mathbb{D}}({A \cap B})}{P_{\mathbb{D}}(B)} \ if \ P_{\mathbb{D}}(B)\succ 0 \ and \ P_{\mathbb{D}}(B) \notin \mathfrak{S}_{\mathbb{D}};\\\\
P_{\mathbb{D}}(A) \ if \ P_{\mathbb{D}}(B)=0;\\\\
\dfrac{P_{\mathbb{D}}({A \cap B})}{\lambda_{1}} e + P_{\mathbb{D}}(A) e^{\dagger} \ if \ P_{\mathbb{D}}(B) = \lambda_{1} e, \lambda_{1}>0;\\\\
P_{\mathbb{D}}(A) e + \dfrac { P_{\mathbb{D}}({A \cap B })}{\lambda_{2}} e^{\dagger} \ if \ P_{\mathbb{D}}(B) = \lambda_{2} e, \lambda_{2}>0.\\  
\end{array}\right.$$
\end{definition}
For a fixed B, with $P_{\mathbb{D}}(B)\neq 0$,\;the conditional probability verifies all the axioms of the $\mathbb{D}$- probability, that is, it defines a new $\mathbb{D}$- probabilistic measure on the measurable space $(B,\Sigma_{B})$ where $\Sigma_{B}$ is the $\sigma$-algebra of the sets of the form $A\cap B$ with $A\in \Sigma.$
\end{section}
%%%%%%%%%%%%%%%%%%%%%%%%%%%%%%%%%%%%%%%%%
%%%%%%%%%%%%%%%%%%%%%%%%%%%%%%%%%%%%%%%%
%%%%%%%%%%%%%%%%%%%%%%%%%%%%%%%%%%%%%%%
\begin{section}{$\mathbb{D}$- valued random variables and their properties}
The topology induced by the hyperbolic valued norm on $\mathbb{D}$ generates a Borel $\sigma$-algebra $\mathfrak{B}_{\mathbb{D}}$ on $\mathbb{D}.$\\
Let $\gamma_{0}= a_{0}e + b_{0}e^{\dagger} \in\mathbb{D}^{+}$ and $z_{0}= \mu e + \nu e^{\dagger} \in\mathbb{D}.$ Then the open ball of hyperbolic radius $\gamma_{0}$ with center at $z_{0}$ is $ B(z_{0},\gamma_{0})=\left\lbrace z=z_{1}e+z_{2}e^{\dagger}\;:\; |z-z_{0}|\prec \gamma_{0}\right\rbrace.$\\
If $ \gamma_{0} $ is not a zero divisor, i.e, $ a_{0}\neq 0,\;b_{0}\neq 0 ,$ then\\ $$ B(z_{0},\gamma_{0})=\left\lbrace z=z_{1}e+z_{2}e^{\dagger}\;:\; |z_{1}-\mu|\prec a_{0},|z_{2}-\nu|\prec b_{0}\right\rbrace,$$ which is a rectangle with center at $ z_{0} $ and having sides of length $ 2\mu $ and $ 2\nu .$\\
If $ \gamma_{0} $ is a zero divisor, then we cannot define the ball in the same way because none of the inequalities $ |z_{1}-\mu|\prec 0\;\; or\;\; |z_{2}-\nu|\prec 0 $ has solutions. So, we define the ball $ B(z_{0},\gamma_{0}) $ to be the open intervals $ (\mu-a_{0}, \mu+ a_{0}) $ or $ (\nu-b_{0}, \nu+ b_{0}) $ according as $\gamma_{0}\in\mathbb{D}_{e}^{+}$ \; or \; $\gamma_{0} \in\mathbb{D}_{e^{\dagger}}^{+}$ respectively.
\begin{definition}
Let $(\Omega,\Sigma,P_{\mathbb{D}})$ be a $\mathbb{D}$- probabilistic space. A function $$ X_{\mathbb{D}}\; \colon \Omega \longrightarrow \mathbb{D} $$ such that $$ X_{\mathbb{D}}^{-1}(D)\in \Sigma $$ for every open set D  in $\mathbb{D}$ is called a $\mathbb{D}$-random variable. Thus a $\mathbb{D}$-random variable is a $\mathbb{D}$-valued function on $ \Omega $ which is measurable.   
\end{definition}
\begin{example}
Let $(\Omega,\Sigma,P_{\mathbb{D}})$ be a $\mathbb{D}$- probabilistic space and $ E\in \Sigma. $ Then the function $ \chi_{E} \; \colon \Omega \longrightarrow \mathbb{D} $ defined by 
$$\chi_{E}(w) = \left\{ \begin{array}{l}
$\thorn$ \ \;\;\; if \ w \in E\\
0 \ \;\;\; if w \notin E\\  
\end{array} \right.$$ where \thorn = 1, e or  $ e^{\dagger}.$ Then $ \chi_{E} $ is a $\mathbb{D}$-random variable on $ \Omega $ as for any open subset V of $\mathbb{D},$ we have 
$$\chi_{E}^{-1}(V) = \left\{ \begin{array}{l}
E \ \;\;\;\; if \ $\thorn$  \in V \;\; and \;\; 0 \notin V \\
E^{c} \ \;\;\; if \ $\thorn$  \notin V \;\; and \;\; 0 \in V \\ 
\Omega \ \;\;\;\;\; if \ $\thorn$  \in V \;\; and \;\; 0 \in V \\
\phi \ \;\;\;\;\; if \ $\thorn$  \notin V \;\; and \;\; 0 \notin V \\ 
\end{array} \right.$$ where $ E, E^{c}, \Omega \; and \;\; \phi $ are measurable.
\end{example}
\begin{theorem} \cite [Theorem 1.7]{WRudin}
Let  $ g \; \colon  Y \longrightarrow Z $ be a continuous function, where Y and Z are topological spaces.If X is a measurable space, if $ f \; \colon X \longrightarrow Y $ is measurable, and $ h=g \circ f ,$ then $ h \; \colon X \longrightarrow Z $ is measurable. 
\end{theorem}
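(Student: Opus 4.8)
The plan is to unwind the two definitions at play and then exploit the elementary fact that preimages commute with composition. Recall that measurability of $f \colon X \longrightarrow Y$ means that $f^{-1}(V)$ lies in the $\sigma$-algebra $\mathfrak{M}$ of $X$ for every open set $V \subseteq Y$, while continuity of $g \colon Y \longrightarrow Z$ means that $g^{-1}(W)$ is open in $Y$ for every open set $W \subseteq Z$. The goal is to verify the corresponding condition for $h = g \circ f$, namely that $h^{-1}(W) \in \mathfrak{M}$ for every open $W \subseteq Z$.

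First I would fix an arbitrary open set $W \subseteq Z$ and record the purely set-theoretic identity $h^{-1}(W) = (g \circ f)^{-1}(W) = f^{-1}\bigl(g^{-1}(W)\bigr)$, which holds for any pair of functions and any target subset. This reduces the problem to handling the two inner preimages one after the other.

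Next I would apply continuity of $g$ to conclude that $V := g^{-1}(W)$ is open in $Y$, and then apply measurability of $f$ to this open set $V$ to obtain $f^{-1}(V) \in \mathfrak{M}$. Substituting back yields $h^{-1}(W) = f^{-1}(V) \in \mathfrak{M}$. Since $W$ was an arbitrary open subset of $Z$, this is precisely the statement that $h$ is measurable, completing the argument.

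There is no genuine obstacle here: the whole content is the preimage identity composed with the chain open $\mapsto$ open $\mapsto$ measurable, and the argument does not even invoke any closure properties of $\mathfrak{M}$. The only point deserving a word of care is that ``measurable'' for a map into a topological space is defined through preimages of \emph{open} sets rather than of arbitrary Borel sets; this is exactly what lets me feed the open set $g^{-1}(W)$ directly into the hypothesis on $f$, with no generating-class or monotone-class reduction needed.
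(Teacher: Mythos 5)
Your proof is correct and is exactly the standard argument: the paper itself states this result as a citation of Rudin's Theorem 1.7 without reproducing a proof, and your chain $h^{-1}(W) = f^{-1}\bigl(g^{-1}(W)\bigr)$, with continuity making $g^{-1}(W)$ open and measurability of $f$ finishing the job, is precisely Rudin's proof. Nothing is missing, and your closing remark that measurability here is phrased via preimages of open sets (so no Borel generating-class argument is needed) is the right point of care.
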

Every $\mathbb{D}$-random variable $ X_{\mathbb{D}} $ can be written as $$ X_{\mathbb{D}}(w) = x_{1}(w) e + x_{2}(w) e^{\dagger} = X_{1}(w) e + X_{2}(w) e^{\dagger},$$ where $ X_{1}(w) = x_{1}(w) + x_{2}(w) $ and $ X_{2}(w) = x_{1}(w) - x_{2}(w) $ are real valued functions on $ \Omega .$\\
Taking $ X = \Omega,\; f = X_{\mathbb{D}},\; g = \Pi_{i} \; \colon \mathbb{D} \longrightarrow \mathbb{R} $ defined by $ \Pi_{i}(z = z_{1} e + z_{2} e^{\dagger}) = z_{i},\; i=1,2 $ in theorem 3.3, we find that $ g \circ f = g \circ X_{\mathbb{D}} = \Pi_{i} \circ X_{\mathbb{D}} = X_{i}  $ is measurable for i=1,\;2. Thus every $\mathbb{D}$-random variable $ X_{\mathbb{D}} $ on $ \Omega $ can be written as \\ $ X_{\mathbb{D}}(w) = X_{1}(w) e + X_{2}(w) e^{\dagger}, $ where $ X_{1}(w) $ and $ X_{2}(w) $ are real valued random variables on $ \Omega .$ 
\begin{theorem} \cite [Theorem 1.8]{WRudin}
Let u and v be real measurable functions on a measure space X, let $ \phi $ be the continuous mapping of the plane into a topological space Y, and define $ h(x) = \phi(u(x),v(x)),$ for $ x \in X.$ Then $  h \; \colon X \longrightarrow Y $ is measurable. 
\end{theorem}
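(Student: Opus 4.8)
The plan is to factor $h$ as a composition to which the previously recorded composition theorem (Theorem 3.3) applies directly. First I would introduce the map $f \colon X \longrightarrow \mathbb{R}^2$ defined by $f(x) = (u(x), v(x))$, where $\mathbb{R}^2$ carries its usual Euclidean topology. With this notation $h = \phi \circ f$, and since $\phi$ is continuous by hypothesis, Theorem 3.3 reduces the entire problem to establishing that the single vector-valued map $f$ is measurable from $X$ into the plane.

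To verify measurability of $f$, it suffices to check that $f^{-1}(V) \in \Sigma$ for every open set $V \subseteq \mathbb{R}^2$. The key step is the topological fact that every open subset of the plane can be written as a countable union of open rectangles $R = I_1 \times I_2$ whose sides $I_1, I_2$ are open intervals with rational endpoints. For any single such rectangle one has the set identity $f^{-1}(R) = u^{-1}(I_1) \cap v^{-1}(I_2)$, and each factor lies in $\Sigma$ because $u$ and $v$ are real measurable functions; since $\Sigma$ is closed under finite intersections, $f^{-1}(R) \in \Sigma$.

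Writing the given open set as $V = \bigcup_{n=1}^{\infty} R_n$ with each $R_n$ a rational rectangle then yields $f^{-1}(V) = \bigcup_{n=1}^{\infty} f^{-1}(R_n)$, which belongs to $\Sigma$ since a $\sigma$-algebra is closed under countable unions. Hence $f$ is measurable. Applying Theorem 3.3 to the measurable map $f$ and the continuous map $\phi$ then gives that $h = \phi \circ f \colon X \longrightarrow Y$ is measurable, which completes the argument.

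The main obstacle is precisely the decomposition of an arbitrary open set in $\mathbb{R}^2$ into countably many rational rectangles; this is the device that transfers the one-dimensional measurability of $u$ and $v$ into measurability of the vector-valued map $f$. Once that reduction is in place, the passage through $\phi$ is immediate from the composition theorem already available, and the remaining set-theoretic manipulations (intersections and countable unions in $\Sigma$) are entirely routine.
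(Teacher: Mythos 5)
Your proof is correct, and it is precisely the standard argument for this result: the paper itself offers no proof, simply citing Rudin's Theorem 1.8, and your reduction --- packaging $u$ and $v$ into $f(x) = (u(x), v(x))$, proving measurability of $f$ via the decomposition of an open subset of the plane into countably many rational rectangles together with the identity $f^{-1}(I_1 \times I_2) = u^{-1}(I_1) \cap v^{-1}(I_2)$, and then invoking the composition theorem (Theorem 3.3) with the continuous map $\phi$ --- is exactly the proof given in that cited source. No gaps; the argument is complete as written.
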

Let $ X_{1} $ and $ X_{2} $ be real valued random variables on a measure space $ \Omega,  Y=\mathbb{D}, $ let $ \phi(s,t) = \alpha s + \beta t, \forall s,t \in \mathbb{R} $ and $ \alpha , \beta \in \mathbb{D}. $ Then by using theorem 3.4, we see that $ \alpha X_{1} + \beta X_{2} $ is a $\mathbb{D}$-random variable on $ \Omega.$ In particular $ e X_{1} + e^{\dagger} X_{2} $ is a $\mathbb{D}$-random variable on $ \Omega.$ 
Thus any $\mathbb{D}$-random variable $ X_{\mathbb{D}} $ determines on $ \Omega,$ two real valued random variables $ X_{1} $ and $ X_{2}$ such that $ X_{\mathbb{D}}=e X_{1} + e^{\dagger} X_{2} $ and conversely. Thus to define a $\mathbb{D}$-valued random variable is equivalent to consider on the same measurable space, a pair of usual $\mathbb{R}$-valued random variables.
\begin{theorem}
A $\mathbb{D}$- valued function $ X_{\mathbb{D}}$    on measurable space $(\Omega,\Sigma)$ is a $\mathbb{D}$-random variable if and only if   $\left\lbrace{w \in \Omega \; | \; X_{\mathbb{D}}(w) \succ \alpha \; \text {or} \; X_{\mathbb{D}}(w) \in A_{\alpha} } \right\rbrace \in \Sigma,$ for every $ \alpha \in \mathbb{D}.$ 
\end{theorem}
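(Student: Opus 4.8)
The plan is to reduce the partial-order test set to a pair of ordinary real superlevel sets for the idempotent components $X_1, X_2$ of $X_{\mathbb{D}} = X_1 e + X_2 e^\dagger$, and then invoke the equivalence established after Theorem 3.4: $X_{\mathbb{D}}$ is a $\mathbb{D}$-random variable if and only if both $X_1$ and $X_2$ are real random variables. First I would rewrite the set in question. Writing $\alpha = \alpha_1 e + \alpha_2 e^\dagger$, the partition $\mathbb{D} = \{z \preceq \alpha\} \cup \{z \succ \alpha\} \cup A_\alpha$ shows that $\{z \in \mathbb{D} : z \succ \alpha \text{ or } z \in A_\alpha\}$ is precisely the complement of $\{z : z \preceq \alpha\}$. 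Since $z \preceq \alpha$ unwinds to $\alpha - z \in \mathbb{D}^+$, i.e. $z_1 \leq \alpha_1$ and $z_2 \leq \alpha_2$, the set $S_\alpha := \{w \in \Omega : X_{\mathbb{D}}(w) \succ \alpha \text{ or } X_{\mathbb{D}}(w) \in A_\alpha\}$ equals $\{w : X_1(w) > \alpha_1\} \cup \{w : X_2(w) > \alpha_2\}$.

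The forward direction is then immediate: if $X_{\mathbb{D}}$ is a $\mathbb{D}$-random variable, then $X_1, X_2$ are real random variables, so each of $\{X_1 > \alpha_1\}$ and $\{X_2 > \alpha_2\}$ lies in $\Sigma$, and hence so does their union $S_\alpha$, for every $\alpha$.

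The substance is the converse, and the hard part is that $S_\alpha$ couples the two components through a union, so I must decouple them. The device is to drive one threshold to $+\infty$ along integers. Fixing $a \in \mathbb{R}$ and taking $S_{a e + n e^\dagger} = \{X_1 > a\} \cup \{X_2 > n\}$ for $n \in \mathbb{N}$, the finiteness of $X_2$ (which holds because $X_{\mathbb{D}}$ takes values in $\mathbb{D}$) gives $\bigcap_n \{X_2 > n\} = \emptyset$, and a direct check shows $\bigcap_{n=1}^\infty S_{a e + n e^\dagger} = \{w : X_1(w) > a\}$. As a countable intersection of sets in $\Sigma$, this belongs to $\Sigma$; since $a$ is arbitrary, $X_1$ is a real random variable. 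By the symmetric choice $S_{n e + b e^\dagger}$ with $n \to \infty$, the set $\{X_2 > b\}$ lies in $\Sigma$ for every $b$, so $X_2$ is a real random variable as well. The one hypothesis to track throughout is that the components are genuinely real-valued (finite), which is exactly what makes these intersections collapse.

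With both $X_1$ and $X_2$ measurable, the map $X_{\mathbb{D}} = X_1 e + X_2 e^\dagger$ is a $\mathbb{D}$-random variable by the argument following Theorem 3.4, which closes the equivalence.
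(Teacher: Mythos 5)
Your proposal is correct, and it takes a genuinely different route from the paper. The paper proves the hard direction on the codomain side: it forms the class $S$ of all $E\subset\mathbb{D}$ with $X_{\mathbb{D}}^{-1}(E)\in\Sigma$, notes that the hypothesis puts the complements $\left\lbrace z\preceq\alpha\right\rbrace$ into $S$, takes unions along $\alpha_n\uparrow\alpha$ to get the sets $\left\lbrace z\prec\alpha\right\rbrace$, assembles hyperbolic balls as intersections of an ``upper'' and a ``lower'' order set, and concludes via the fact that every open set in $\mathbb{D}$ is a countable union of such balls. You instead work on the domain side: after the same reduction $S_\alpha=\left\lbrace X_1>\alpha_1\right\rbrace\cup\left\lbrace X_2>\alpha_2\right\rbrace$, you decouple the two idempotent components by sending one threshold to $+\infty$ along integers, using the finiteness of the components to collapse $\bigcap_n\bigl(\left\lbrace X_1>a\right\rbrace\cup\left\lbrace X_2>n\right\rbrace\bigr)$ to $\left\lbrace X_1>a\right\rbrace$, and then reduce to the classical fact that real superlevel sets characterize real random variables, closing with the paper's own equivalence (via Rudin's composition theorems) between measurability of $X_{\mathbb{D}}$ and of the pair $(X_1,X_2)$. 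Two remarks on what each approach buys. Your decoupling step is in fact exactly the ingredient the paper's argument needs but does not spell out: the paper's ball decomposition requires the strict quarter $\left\lbrace z\succ\beta\right\rbrace$ to lie in $S$, which does not follow merely from complements of $\left\lbrace z\preceq\alpha\right\rbrace$ — it is obtained from them precisely by your intersection-over-$n$ trick applied inside $S$ — so your route is the more self-contained of the two. In the easy direction your identity $S_\alpha=\left\lbrace X_1>\alpha_1\right\rbrace\cup\left\lbrace X_2>\alpha_2\right\rbrace$ is also cleaner than the paper's decomposition into three sets with strict inequalities, which as written misses boundary cases such as $X_1=\alpha_1$, $X_2>\alpha_2$; measurability is unaffected, but your version is exact. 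The paper's approach, for its part, stays closer to the generation of the Borel $\sigma$-algebra of $\mathbb{D}$ by order sets and balls, which is of independent interest, whereas yours leans on the previously established component-wise equivalence.
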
 
\begin{proof}
Let $ X_{\mathbb{D}}$ be a $\mathbb{D}$- valued function on measurable space $(\Omega,\Sigma)$ such that $\left\lbrace{w \in \Omega \; | \; X_{\mathbb{D}}(w) \succ \alpha \; \text {or} \; X_{\mathbb{D}}(w) \in A_{\alpha} }\right\rbrace \in \Sigma,$ for every $ \alpha \in \mathbb{D}.$ Let S be the collection of all E $ \subset \mathbb{D} $ such that $ X_{\mathbb{D}}^{-1}(E) \in \Sigma.$ Choose a hyperbolic number $ \alpha ,$ and choose $ \alpha_{n} < \alpha $ so that $ \alpha_{n} \rightarrow \alpha $ as $ n \rightarrow \infty. $ Since $\left\lbrace{z \in \mathbb{D} \; | \; z \succ \alpha_{n} \; \in A_{\alpha_{n}} }\right\rbrace \in S,$ for every $ n \in \mathbb{N} $ and $$ \left\lbrace{z \in \mathbb{D} \; | \; z \prec \alpha }\right\rbrace = \cup_{n=1}^{\infty} \left\lbrace{z \in \mathbb{D} \; | \; z \preceq \alpha_{n} }\right\rbrace = \cup_{n=1}^{\infty} \left\lbrace{z \in \mathbb{D} \; | \; z \succ \alpha_{n} \; \text {or} \; z \in A_{\alpha_{n}} }\right\rbrace ^{c}, $$ and S is a $\sigma$-algebra, we see that $ \left\lbrace{z \in \mathbb{D} \; | \; z \prec \alpha }\right\rbrace \in S. $ Now any ball $ B(z_{0},\gamma_{0}) $ with centre at  $ z_{0} = z_{01} e + z_{02} e^{\dagger} $ and radius $ \gamma_{0} = \gamma_{01} e + \gamma_{02} e^{\dagger} $ can be written as $$   \left\lbrace{z \in \mathbb{D} \; | \; z \prec (z_{01}  + \gamma_{01}) e + (z_{02} + \gamma_{02}) e^{\dagger}) }\right\rbrace \cap \left\lbrace{z \in \mathbb{D} \; | \; z \succ (z_{01}  - \gamma_{01}) e + (z_{02} - \gamma_{02}) e^{\dagger}) }\right\rbrace $$ and so  $ B(z_{0},\gamma_{0}) \in S. $ Since every open set in $ \mathbb{D} $ is a countable union of balls of these types, S contains every open set. Thus $ X_{\mathbb{D}}$ is a $\mathbb{D}$-random variable.\\
Conversely, suppose that $ X_{\mathbb{D}}$ is a $\mathbb{D}$-random variable.Then $ X_{1}$ and $ X_{2}$ are real valued random variables. The set $\left\lbrace{w \in \Omega \; | \; X_{\mathbb{D}}(w) \succ \alpha \; \text {or} \; X_{\mathbb{D}}(w) \in A_{\alpha} } \right\rbrace $ is union of the sets $$\left\lbrace{w \in \Omega \; | \; X_{1}(w) > \alpha_{1} } \right\rbrace \cap \left\lbrace{w \in \Omega \; | \; X_{2}(w) > \alpha_{2} } \right\rbrace, \left\lbrace{w \in \Omega \; | \; X_{1}(w) < \alpha_{1} and X_{2}(w) > \alpha_{2} } \right\rbrace, $$  and $ \left\lbrace{w \in \Omega \; | \; X_{1}(w) > \alpha_{1} and X_{2}(w) < \alpha_{2} } \right\rbrace $ for every $ \alpha = \alpha_{1} e + \alpha_{2} e^{\dagger} $ and each of these sets is measurable as $ X_{1}$ and $ X_{2}$ are real valued random variables. Thus $\left\lbrace{w \in \Omega \; | \; X_{\mathbb{D}}(w) \succ \alpha \; \text {or} \; X_{\mathbb{D}}(w) \in A_{\alpha} } \right\rbrace \in \Sigma, $ for every $ \alpha \in \mathbb{D}.$           

\end{proof}
\begin{theorem}
Let $ X_{\mathbb{D}} $  be $\mathbb{D}$-random variable on $(\Omega,\Sigma,P_{\mathbb{D}})$ . Then the following conditions are equivalent: 
\begin{enumerate}
\item[(i)]$\left\lbrace{w \in \Omega \; | \; X_{\mathbb{D}}(w) \preceq \alpha}\right\rbrace \in \Sigma,$ for every $ \alpha \in \mathbb{D}.$\\ 
\item[(ii)] $\left\lbrace{w \in \Omega \; | \; X_{\mathbb{D}}(w) \succ \alpha \; \text {or} \; X_{\mathbb{D}}(w) \in A_{\alpha}}\right\rbrace \in \Sigma,$ for every $ \alpha \in \mathbb{D}.$\\
\item[(iii)] $\left\lbrace{w \in \Omega \; | \; X_{\mathbb{D}}(w) \succeq \alpha \; \text {or} \; X_{\mathbb{D}}(w) \in A_{\alpha}}\right\rbrace \in \Sigma,$ for every $ \alpha \in \mathbb{D}.$\\
\item[(iv)] $\left\lbrace{w \in \Omega \; | \; X_{\mathbb{D}}(w) \prec \alpha}\right\rbrace \in \Sigma,$ for every $ \alpha \in \mathbb{D}.$\\
\end{enumerate}
\end{theorem}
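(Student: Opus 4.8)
The plan is to run the standard measure-theoretic equivalence argument, but organized around the quarter-plane partitions of $\mathbb{D}$ recalled in Section 2 together with the fact that $\Sigma$ is a $\sigma$-algebra. First I would establish the two ``complementation'' equivalences. The decomposition of the hyperbolic plane relative to $\alpha$ gives the disjoint splitting $\mathbb{D} = \{z \preceq \alpha\} \sqcup \left(\{z \succ \alpha\} \cup A_{\alpha}\right)$, so the set appearing in (ii), namely $\{w : X_{\mathbb{D}}(w) \succ \alpha \text{ or } X_{\mathbb{D}}(w) \in A_{\alpha}\}$, is exactly the complement in $\Omega$ of the set in (i). Since $\Sigma$ is closed under complementation and $\alpha$ is arbitrary, (i)$\Leftrightarrow$(ii). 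By the companion partition $\mathbb{D} = \{z \prec \alpha\} \sqcup \left(\{z \succeq \alpha\} \cup A_{\alpha}\right)$ (which one checks is again a disjoint cover, the corner $\alpha$ now sitting in the $\succeq$ piece), the set in (iii) is the complement of the set in (iv), so (iii)$\Leftrightarrow$(iv).

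It then remains to bridge the two pairs, for which it suffices to connect (i) with (iv). Mirroring the proof of Theorem 3.5, I would approximate $\alpha = \alpha_{1}e + \alpha_{2}e^{\dagger}$ monotonically: for (iv)$\Rightarrow$(i) use $\alpha^{(n)} = (\alpha_{1}+\tfrac1n)e + (\alpha_{2}+\tfrac1n)e^{\dagger} \searrow \alpha$ and the identity $\{X_{\mathbb{D}} \preceq \alpha\} = \bigcap_{n}\{X_{\mathbb{D}} \prec \alpha^{(n)}\}$; for the reverse use $\alpha_{(n)} = (\alpha_{1}-\tfrac1n)e + (\alpha_{2}-\tfrac1n)e^{\dagger} \nearrow \alpha$. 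Each identity exhibits one of our sets as a countable intersection or union of sets already known to lie in $\Sigma$, and closure of $\Sigma$ under countable operations closes the loop $\mathrm{(i)}\Leftrightarrow\mathrm{(ii)}$, $\mathrm{(iii)}\Leftrightarrow\mathrm{(iv)}$, $\mathrm{(i)}\Leftrightarrow\mathrm{(iv)}$, giving the equivalence of all four.

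The delicate point, and the step I would scrutinize most, is the boundary behaviour of the order at the corner $\alpha$. In the partial order on $\mathbb{D}$ one has $z \prec \alpha$ iff $z \preceq \alpha$ and $z \neq \alpha$, so in idempotent coordinates $\{z \prec \alpha\}$ still contains boundary points with one coordinate equal to $\alpha_{i}$, whereas the diagonal union $\bigcup_{n}\{z \preceq \alpha_{(n)}\}$ captures only the points lying strictly below $\alpha$ in \emph{both} coordinates. Thus the upward approximation $\{X_{\mathbb{D}} \preceq \alpha\} = \bigcap_{n}\{X_{\mathbb{D}} \prec \alpha^{(n)}\}$ is genuinely exact, but the downward one is not, and this mismatch is the real obstacle rather than any $\sigma$-algebra bookkeeping.

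To discharge it cleanly I would pass through the real components. Since $X_{\mathbb{D}} = X_{1}e + X_{2}e^{\dagger}$ and $\{z \preceq \alpha\}$ corresponds to $\{X_{1} \leq \alpha_{1}\} \cap \{X_{2} \leq \alpha_{2}\}$, condition (i) is equivalent to measurability of both $X_{1}$ and $X_{2}$: one recovers single-coordinate sets by sending the other coordinate to $+\infty$, via $\{X_{1} \leq c\} = \bigcup_{m}\{X_{\mathbb{D}} \preceq c\,e + m\,e^{\dagger}\}$ and symmetrically for $X_{2}$. Each of (ii), (iii), (iv) reduces in the same way to the measurability of $X_{1}$ and $X_{2}$ (for (iv), write $\{X_{\mathbb{D}} \prec \alpha\} = \left(\{X_{1}\leq\alpha_{1}\}\cap\{X_{2}\leq\alpha_{2}\}\right) \setminus \left(\{X_{1}=\alpha_{1}\}\cap\{X_{2}=\alpha_{2}\}\right)$, which is measurable once $X_{1},X_{2}$ are). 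Hence all four conditions are equivalent to ``$X_{1}$ and $X_{2}$ are real-valued random variables,'' and this common reformulation is the cleanest route past the boundary subtlety.
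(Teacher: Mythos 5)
Your proof is correct, and its first half coincides with the paper's argument: the paper also proves (i)$\Leftrightarrow$(ii) and (iii)$\Leftrightarrow$(iv) by observing that the two sets in each pair partition $\Omega$, so each is the complement of the other in $\Sigma$. Where you genuinely diverge is the bridge between the two pairs. The paper proves (iv)$\Rightarrow$(i) from the identity $\left\lbrace X_{\mathbb{D}} \preceq \alpha \right\rbrace = \bigcap_{n}\left\lbrace X_{\mathbb{D}} \prec \alpha + \frac{1}{n}\right\rbrace$ and then dismisses the reverse implication with the single line that it ``follows by the same argument,'' i.e.\ implicitly via the diagonal union $\left\lbrace X_{\mathbb{D}} \prec \alpha \right\rbrace = \bigcup_{n}\left\lbrace X_{\mathbb{D}} \preceq \alpha - \frac{1}{n}\right\rbrace$. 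The boundary subtlety you flag is a real defect there, not excess caution: the complementation steps force the reading $z \prec \alpha$ iff $z \preceq \alpha$ and $z \neq \alpha$ (under componentwise-strict inequalities the sets in (iii) and (iv) are \emph{not} complementary, since points with $z_{1}=\alpha_{1},\, z_{2}>\alpha_{2}$ would belong to neither), and under that reading the diagonal union misses exactly the points with one idempotent coordinate equal to that of $\alpha$, so the paper's ``same argument'' does not literally go through. Your repair --- reducing each of (i)--(iv) to measurability of the real components $X_{1}, X_{2}$ via rectangle sets such as $\left\lbrace X_{1}\leq c\right\rbrace = \bigcup_{m}\left\lbrace X_{\mathbb{D}} \preceq c\,e + m\,e^{\dagger}\right\rbrace$, i.e.\ moving the two coordinates of $\alpha$ independently rather than diagonally --- closes this gap and is in that respect more rigorous than the paper's own proof. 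What the paper's route buys is brevity and a purely order-theoretic formulation that never invokes the idempotent coordinates; what yours buys is an exact treatment of the corner behaviour and, as a by-product, the clean common reformulation of all four conditions as ``$X_{1}$ and $X_{2}$ are real random variables,'' which is consistent with how the paper itself handles $\mathbb{D}$-random variables elsewhere (e.g.\ in the converse half of its Theorem 3.5).
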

\begin{proof}
$(i)\Leftrightarrow (ii). $ Let $ \alpha \in \mathbb{D}.$ Let $ D_{1} $ and $ D_{2} $ be two sets in (i) and (ii) respectively. Then $ D_{1} \cup D_{2} = \mathbb{D} \in \Sigma $ and $ D_{1} \cap D_{2} = \phi $ so that $ D_{1} = \mathbb{D} \setminus D_{2} $ and $ D_{2} = \mathbb{D} \setminus D_{1}. $ Thus $ D_{1} \in \Sigma \Leftrightarrow D_{2} \in \Sigma. $ \\
 $(iii)\Leftrightarrow (iv) $ by the same argument as above.\\
$(iv)\Leftrightarrow (i). $    Note that for any $ w \in \Omega $ and $ \alpha \in \mathbb{D}, $ we have $ X_{\mathbb{D}} \preceq \alpha $ if and only if  $ X_{\mathbb{D}} \preceq \alpha + \frac{1}{n} $ for every $ n \in \mathbb{N}.$ thus we have  $$\left\lbrace{w \in \Omega \; | \; X_{\mathbb{D}}(w) \preceq \alpha}\right\rbrace = \cap_{n \in \mathbb{N}} \left\lbrace{w \in \Omega \; | \; X_{\mathbb{D}}(w) \prec \alpha + \frac{1}{n} }\right\rbrace. $$
If $ X_{\mathbb{D}} $ satisfies (iv), then every set in the intersection is $ \Sigma $ and then so is the countable intersection. Thus $ X_{\mathbb{D}} $ satisfies (i).\\
$(i)\Leftrightarrow (iv). $ Proof follows by the same argument as above.\\       
\end{proof}
If $ X_{1} $ and $ X_{2} $ are Lebesgue integrable $ \mathbb{D} $-random variables with respect to the real valued probabilistic measures $ P_{1} $ and $ P_{2} $ respectively, we define the integral of $\mathbb{D}$-random variable $ X_{\mathbb{D}} = e X_{1} + e^{\dagger} X_{2} $ as\\
$$ \int_{\Omega} X_{\mathbb{D}}\; dP_{\mathbb{D}} = e \int_{\Omega} X_{1} \; dP_{1} + e^{\dagger} \int_{\Omega} X_{2} \; dP_{2}, $$ which exists in $\mathbb{D}$ since $ \int_{\Omega} X_{1} dP_{1} $ and $ \int_{\Omega} X_{1} dP_{1} $ both exists in $\mathbb{R}.$ We say that $ X_{\mathbb{D}} $ is  $ \mathbb{D} $-Lebesgue integrable.
\begin{theorem}
Let $ X_{\mathbb{D}} $ and $ Y_{\mathbb{D}} $ be $\mathbb{D}$-random variables on $(\Omega,\Sigma,P_{\mathbb{D}})$ which are integrable. Then $ \alpha X_{\mathbb{D}} + \beta Y_{\mathbb{D}} $ is integrable and $$ \int_{\Omega}( \alpha X_{\mathbb{D}} + \beta Y_{\mathbb{D}}) \; dP_{\mathbb{D}} = \alpha \int_{\Omega} X_{\mathbb{D}} \; dP_{\mathbb{D}} + \beta \int_{\Omega} Y_{\mathbb{D}} \; dP_{\mathbb{D}}, $$ where $ \alpha $ and $ \beta $ are hyperbolic numbers.       
\end{theorem}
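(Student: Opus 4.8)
The plan is to reduce the whole statement to the ordinary real-valued linearity of the Lebesgue integral by passing to idempotent components. Writing $\alpha = \alpha_1 e + \alpha_2 e^{\dagger}$, $\beta = \beta_1 e + \beta_2 e^{\dagger}$, $X_{\mathbb{D}} = e X_1 + e^{\dagger} X_2$ and $Y_{\mathbb{D}} = e Y_1 + e^{\dagger} Y_2$, the integrability hypothesis means exactly that $X_1, Y_1$ are $P_1$-integrable and $X_2, Y_2$ are $P_2$-integrable in the real sense, which is precisely the data that the definition of $\int_\Omega \cdot \, dP_{\mathbb{D}}$ consumes. So the strategy is: compute the integrand in components, integrate each real channel separately, and recombine.

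First I would compute the integrand $\alpha X_{\mathbb{D}} + \beta Y_{\mathbb{D}}$ component-wise. Using the defining relations $e^2 = e$, $(e^{\dagger})^2 = e^{\dagger}$ together with the orthogonality $e\, e^{\dagger} = 0$, the product $\alpha X_{\mathbb{D}} = (\alpha_1 e + \alpha_2 e^{\dagger})(e X_1 + e^{\dagger} X_2)$ collapses to $\alpha_1 X_1 e + \alpha_2 X_2 e^{\dagger}$, the two cross terms dropping out. Treating $\beta Y_{\mathbb{D}}$ identically and adding gives
$$\alpha X_{\mathbb{D}} + \beta Y_{\mathbb{D}} = (\alpha_1 X_1 + \beta_1 Y_1)\, e + (\alpha_2 X_2 + \beta_2 Y_2)\, e^{\dagger}.$$
Each bracketed expression is a real linear combination of real integrable functions, hence real integrable; by the discussion following Theorem 3.4 it follows that $\alpha X_{\mathbb{D}} + \beta Y_{\mathbb{D}}$ is a $\mathbb{D}$-random variable and, being of the required component form with integrable channels, is $\mathbb{D}$-integrable. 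This disposes of the integrability claim.

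Next I would apply the definition of the hyperbolic integral to this decomposition and invoke ordinary real linearity channel by channel:
$$\int_\Omega (\alpha X_{\mathbb{D}} + \beta Y_{\mathbb{D}}) \, dP_{\mathbb{D}} = e \int_\Omega (\alpha_1 X_1 + \beta_1 Y_1) \, dP_1 + e^{\dagger} \int_\Omega (\alpha_2 X_2 + \beta_2 Y_2) \, dP_2.$$
Splitting each real integral by real linearity and regrouping the four resulting terms by reversing the same idempotent multiplication that produced the cross-term cancellation yields $\alpha \int_\Omega X_{\mathbb{D}} \, dP_{\mathbb{D}} + \beta \int_\Omega Y_{\mathbb{D}} \, dP_{\mathbb{D}}$, which is the asserted identity.

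Most of this is bookkeeping, so there is no deep obstacle; the one step that genuinely uses the hyperbolic structure — and the only place an error could slip in — is scalar multiplication by the hyperbolic constants $\alpha$ and $\beta$. The hard part, such as it is, is to verify carefully that multiplication by a hyperbolic number acts diagonally on the two idempotent channels, so that the $e$-channel of $\alpha X_{\mathbb{D}}$ depends only on $\alpha_1$ and $X_1$ and the $e^{\dagger}$-channel only on $\alpha_2$ and $X_2$; this is exactly the content of $e\, e^{\dagger}=0$. Once that diagonal action is established, the cancellation of cross terms and the final recombination are forced, and the result follows.
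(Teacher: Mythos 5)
Your proposal is correct and follows essentially the same route as the paper's own proof: both decompose $\alpha$, $\beta$, $X_{\mathbb{D}}$, $Y_{\mathbb{D}}$ into idempotent components, observe that $\alpha X_{\mathbb{D}}+\beta Y_{\mathbb{D}} = e(\alpha_{1}X_{1}+\beta_{1}Y_{1})+e^{\dagger}(\alpha_{2}X_{2}+\beta_{2}Y_{2})$ with each channel real integrable, then apply the componentwise definition of the hyperbolic integral, real linearity, and regroup to factor out $\alpha$ and $\beta$. The only difference is expository: you spell out the cross-term cancellation via $e\,e^{\dagger}=0$, which the paper's computation uses implicitly.
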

\begin{proof}
Let $ X_{\mathbb{D}} = e X_{1} + e^{\dagger} X_{2},  Y_{\mathbb{D}} = e Y_{1} + e^{\dagger} Y_{2}, \alpha = e \alpha_{1} + e^{\dagger} \alpha_{2},  \beta = e \beta_{1} + e^{\dagger} \beta_{2}. $ Then $ X_{1}, X_{2}, Y_{1}, Y_{2} $ are real valued random variables which are Lebesgue integrable and $ \alpha_{1}, \alpha_{2}, \beta_{1}, \beta_{2} \in \mathbb{R}. $ Therefore $ \alpha_{i} X_{i} + \beta_{i} Y_{i}, i=1,2 $ are real valued random variables which are Lebesgue integrable and so $$ e(\alpha_{1} X_{1} + \beta_{1} Y_{1}) + e^{\dagger} (\alpha_{2} X_{2} + \beta_{2} Y_{2}) = \alpha X_{\mathbb{D}} + \beta Y_{\mathbb{D}} $$ is a $\mathbb{D}$-random variable which is integrable.\\
Thus $ \alpha X_{\mathbb{D}} + \beta Y_{\mathbb{D}} $ is integrable.\\
Now, 
\begin{equation*}
\begin{split}
\int_{\Omega}( \alpha X_{\mathbb{D}} + \beta Y_{\mathbb{D}}) \; dP_{\mathbb{D}}
&= e \int_{\Omega}(\alpha_{1} X_{1} + \beta_{1} Y_{1}) \; dP_{1} + e^{\dagger} \int_{\Omega} (\alpha_{2} X_{2} + \beta_{2} Y_{2}) \; dP_{2} \\ 
&= e \left[ \alpha_{1} \int_{\Omega} X_{1} \; dP_{1} + \beta_{1} \int_{\Omega} Y_{1} \; dP_{1} \right] + e^{\dagger} \left[ \alpha_{2} \int_{\Omega} X_{2} \; dP_{2} + \beta_{2} \int_{\Omega} Y_{2} \; dP_{2} \right]  \\ 
&= \left[ e \alpha_{1} \int_{\Omega} X_{1} \; dP_{1} + e^{\dagger} \alpha_{2} \int_{\Omega} X_{2} \; dP_{2} \right] + \left[ e \beta_{1} \int_{\Omega} Y_{1} \; dP_{1} + e^{\dagger} \beta_{2} \int_{\Omega} Y_{2} \; dP_{2} \right]  \\ 
&= (e \alpha_{1} + e^{\dagger} \alpha_{2}) \left[ e \int_{\Omega} X_{1} \; dP_{1} + e^{\dagger} \int_{\Omega} X_{2} \; dP_{2} \right] \\
&+ (e \beta_{1} + e^{\dagger} \beta_{2}) \left[ e \int_{\Omega} Y_{1} \; dP_{1} + e^{\dagger} \int_{\Omega} Y_{2} \; dP_{2} \right] \\
&= \alpha \int_{\Omega} X_{\mathbb{D}} \; dP_{\mathbb{D}} + \beta \int_{\Omega} Y_{\mathbb{D}} \; dP_{\mathbb{D}}.\\
\end{split}
\end{equation*} 
\end{proof}
A $\mathbb{D}$- valued random variable which can assume only a countable number of hyperbolic values and the values which it takes depends on chance is called a discrete $\mathbb{D}$- valued random variable and a $\mathbb{D}$- valued random variable whose different values cannot be put in one-one correspondence with the set of natural numbers is called continuous $\mathbb{D}$- valued random variable. Let $ X_{\mathbb{D}} = e X_{1} + e^{\dagger} X_{2} $ be a $\mathbb{D}$- valued random variable on $(\Omega,\Sigma,P_{\mathbb{D}}).$ Then $ X_{1} $ and $X_{2}$ are real valued discrete or continuous random variables according as $ X_{\mathbb{D}} $ is discrete or continuous $\mathbb{D}$- valued random variable respectively. The functions $ f_{1} (x)= P_{1}(X_{1}= x) $ and $ f_{2} (y)= P_{2}(X_{2}= y) $ are real valued probability functions of $ X_{1} $ and $X_{2}$ respectively, where $ P_{1} $ and $ P_{2} $ are real valued probabilistic measures on $(\Omega,\Sigma) $ such that $ P_{\mathbb{D}}(A)= e P_{1} (A) + e^{\dagger} P_{2} (A), $ for all $ A \in \Sigma. $\\ The function $ f_{\mathbb{D}}\colon \mathbb{D} \;\longrightarrow \mathbb{D} $ defined as $ f_{\mathbb{D}} (z= e \;x + e^{\dagger}\; y)= [e\; f_{1}(x) + e^{\dagger}\; f_{2} (y)]\; \text {\thorn}, $ for all $ z \in \mathbb{D} $ where \thorn = 1, e or $ e^{\dagger}$ is called probability function of $ X_{\mathbb{D}}. $   If $ X_{\mathbb{D}} $ is discrete, then $ f_{\mathbb{D}}$ is called $\mathbb{D}$- valued probability mass function of $X_{\mathbb{D}} $ and if $ X_{\mathbb{D}} $ is continuous, then $ f_{\mathbb{D}}$ is called $\mathbb{D}$- valued probability density function of $X_{\mathbb{D}}. $ The functions $ F_{1} (x)= P_{1}(X_{1}\leq x) $ and $ F_{2} (y)= P_{2}(X_{2}\leq y) $ are real valued distribution functions of $ X_{1} $ and $X_{2}$ respectively and the function $ F_{\mathbb{D}}\colon \mathbb{D} \;\longrightarrow \mathbb{D} $ defined as\\ $ F_{\mathbb{D}} (z= e\; x + e^{\dagger}\; y)= [e\; F_{1}(x) + e^{\dagger}\; F_{2} (y)]\; \text {\thorn}, $ for all $ z \in \mathbb{D} $ where \thorn = 1, e or $ e^{\dagger}, $ is called distribution function of $X_{\mathbb{D}}. $                  
\end{section}
%%%%%%%%%%%%%%%%%%%%%%%%%%%%%%%%%%%%%%%%%
%%%%%%%%%%%%%%%%%%%%%%%%%%%%%%%%%%%%%%%%
%%%%%%%%%%%%%%%%%%%%%%%%%%%%%%%%%%%%%%%
\begin{section}{ Expectation and variance of $\mathbb{D}$- valued random variables}
In this section, we define expectation, variance and moment generating function of a $\mathbb{D}$- valued random variable and study their properties.
\begin{definition}\label{def32}
The expectation of a $\mathbb{D}$- valued random variable $X_{\mathbb{D}}$ on $(\Omega,\Sigma,P_{\mathbb{D}}),$ denoted by $E(X_{\mathbb{D}}),$ is defined as $$E(X_{\mathbb{D}})=\int_{\Omega}X_{\mathbb{D}}\; dP_{\mathbb{D}},$$ provided $X_{\mathbb{D}}$ is $\mathbb{D}$-Lebesgue integrable. 
\end{definition}
The expectation of a $\mathbb{D}$- valued random variable $X_{\mathbb{D}}=eX_{1}+e^{\dagger}X_{2}$ can be written as $$E(X_{\mathbb{D}})= \int_{\Omega}X_{\mathbb{D}}\; dP_{\mathbb{D}} = e \int_{\Omega} X_{1} \; dP_{1} + e^{\dagger} \int_{\Omega} X_{2} \; dP_{2}= e \; E(X_{1})+e^{\dagger}\; E(X_{2}).$$ It is also called mean of $X_{\mathbb{D}}$ and is denoted by $\mu_{\mathbb{D}}.$ 
\begin{definition}\label{def3}
The variance of a $\mathbb{D}$- valued random variable $X_{\mathbb{D}}$ on $(\Omega,\Sigma,P_{\mathbb{D}}),$ denoted by $var(X_{\mathbb{D}}),$ is defined as $$var(X_{\mathbb{D}})= E(X_{\mathbb{D}}-\mu_{\mathbb{D}})^{2}=\int_{\Omega}(X_{\mathbb{D}}-\mu_{\mathbb{D}})^{2}\; dP_{\mathbb{D}}.$$
The variance of a $\mathbb{D}$- valued random variable $$X_{\mathbb{D}}=eX_{1}+e^{\dagger}X_{2}$$ can be written as $$var(X_{D})=e\; var(X_{1})+e^{\dagger}\; var(X_{2}).$$
\end{definition}  
\begin{example}\label{exm1}
Let $(\Omega,\Sigma,P_{\mathbb{D}})$ be a $\mathbb{D}$-probabilistic space. The function $$ \chi_{A}\; \colon \Omega \longrightarrow  \mathbb{D}$$ defined by 
$$\chi_{A}(w)=\left\{  \begin{array}{l}  
$\thorn$ \ \;\;\; if \ w \in A \\
0 \ \;\;\; if w \notin A,\\  
\end{array}\right. $$ is a $ \mathbb{D}$-random variable called indicator $ \mathbb{D}$- random variable,\\
where \thorn =1,\;e\; or\; $ e^{\dagger}.$ Then
\begin{equation*}
\begin{split}
 \mu_{\mathbb{D}}=E(\chi_{A})& = \int_{\Omega}\chi_{A}\; dP_{\mathbb{D}}\\
& =0\;P_{\mathbb{D}}(\chi_{A}=0)+ \text{\thorn} \; P_{\mathbb{D}}(\chi_{A}=p)\\
&=0\;P_{\mathbb{D}}(A^{c})+ \text{\thorn}\;P_{\mathbb{D}} (A)\\
&=0+ \text{\thorn}\;P_{\mathbb{D}}(A)\\
&=\text{\thorn}\;P_{\mathbb{D}}(A)\\
&=P_{\mathbb{D}}(A),\;e\;P_{\mathbb{D}}(A)\;or\;e^{\dagger}\;P_{\mathbb{D}}(A) 
\end{split}
\end{equation*}
according as \thorn =1,\;e\; or \;$ e^{\dagger}$ respectively.
\end{example}
This example shows that expectation of a $\mathbb{D}$-random variable can be zero divisor. If we want to talk about more than one random variable simultaneously, we need joint distributions.\\
Suppose $ X_{\mathbb{D}}$ and $ Y_{\mathbb{D}}$ are discrete $\mathbb{D}$-random variables with ranges $R_{X_{\mathbb{D}}}$ and $R_{Y_{\mathbb{D}}}$ respectively defined on some $\mathbb{D}$-probabilistic space.Then their joint distribution is a function $h \colon R_{X_{\mathbb{D}}}\times R_{Y_{\mathbb{D}}} \longrightarrow \mathbb{D}$  defined by $$ h(\alpha,\beta)= P_{\mathbb{D}}(X_{\mathbb{D}}=\alpha, Y_{\mathbb{D}}=\beta),\forall (\alpha,\beta) \in R_{X_{\mathbb{D}}}\times R_{Y_{\mathbb{D}}}.$$ The marginal probability functions of $ X_{\mathbb{D}}$ and $ Y_{\mathbb{D}}$ are given respectively as follows: 
 $$ f_{X_{\mathbb{D}}}(\alpha)=\left\{  \begin{array}{l}
\Sigma_{y}h(\alpha,\beta),\;\;\;\ if \ X_{\mathbb{D}},\; Y_{\mathbb{D}} \;\text {are discrete} \\
\int_{\mathbb{D}} h(\alpha,\beta)\;dy \ if \ X_{\mathbb{D}},\; Y_{\mathbb{D}}\; \text{are continuous} \\
\end{array}\right.$$ and
$$ f_{Y_{\mathbb{D}}}(\beta)=\left\{  \begin{array}{l}
\Sigma_{x}h(\alpha,\beta),\;\;\;\ if \ X_{\mathbb{D}},\; Y_{\mathbb{D}} \;\text {are discrete} \\
\int_{\mathbb{D}} h(\alpha,\beta)\;dx \ if \ X_{\mathbb{D}},\; Y_{\mathbb{D}}\; \text{are continuous}. \\
\end{array}\right.
 $$ Two $\mathbb{D}$-random variables $ X_{\mathbb{D}}$ and $ Y_{\mathbb{D}}$ are said to be independent if $$ h(\alpha,\beta)=f_{X_{\mathbb{D}}}(\alpha)f_{Y_{\mathbb{D}}}(\beta),\; \forall (\alpha,\beta) \in R_{X_{\mathbb{D}}}\times R_{Y_{\mathbb{D}}}. $$  The joint distribution function of two random variables $X_{\mathbb{D}}= e X_{1} + e^{\dagger} X_{2}$ and $Y_{\mathbb{D}}=e Y_{1} + e^{\dagger} Y_{2}$ , denoted by $F_{X_{\mathbb{D}}Y_{\mathbb{D}}}(\alpha, \beta)$ is a hyperbolic valued function defined for all $\alpha= e \alpha_{1} + e^{\dagger} \alpha_{2}, \beta= e \beta_{1} + e^{\dagger} \beta_{2} \in \mathbb{D}$ by $$F_{X_{\mathbb{D}}Y_{\mathbb{D}}}(\alpha, \beta)= e\; F_{X_{1}Y_{1}}(\alpha_{1}, \beta_{1})+ e^{\dagger}\; F_{X_{2}Y_{2}}(\alpha_{2}, \beta_{2}).$$ where $ F_{X_{1}Y_{1}}$ and $F_{X_{2}Y_{2}}$ are joint distribution functions of real valued random variables $ X_{1}, Y_{1}$ and $ X_{2}, Y_{2}$ respectively.  
\end{section}            
%%%%%%%%%%%%%%%%%%%%%%%%%%%%%%%%%%%%%%%%%
%%%%%%%%%%%%%%%%%%%%%%%%%%%%%%%%%%%%%%%%
%%%%%%%%%%%%%%%%%%%%%%%%%%%%%%%%%%%%%%%

\begin{section}{Properties of expectation and variance}
 \begin{enumerate}
 \item[(i)]The addition theorem. If  $X_{\mathbb{D}}$ and $Y_{\mathbb{D}}$ are $\mathbb{D}$-random variables, then $$E(X_{\mathbb{D}}+Y_{\mathbb{D}})=E(X_{\mathbb{D}})+E(Y_{\mathbb{D}}).$$  
   \begin{proof}
   Since $X_{\mathbb{D}}$ and $Y_{\mathbb{D}}$ are $\mathbb{D}$-random variables, therefore,$$X_{\mathbb{D}}= e X_{1}+ e^{\dagger} X_{2}\; and \; Y_{\mathbb{D}}= e Y_{1}+ e^{\dagger} Y_{2}, $$where $X_{1},X_{2}, Y_{1},Y_{2}$ are real valued random variables.\\Now $X_{\mathbb{D}}+Y_{\mathbb{D}}=e (X_{1}+Y_{1})+ e^{\dagger} (X_{2}+Y_{2})$ and so  
   \begin{equation*}
   \begin{split}
    E(X_{\mathbb{D}}+Y_{\mathbb{D}})& =E[e (X_{1}+Y_{1})+ e^{\dagger} (X_{2}+Y_{2})]\\
   &=eE(X_{1}+Y_{1})+ e^{\dagger}E (X_{2}+Y_{2})\\
   &=e[E(X_{1})+E(Y_{1})] + e^{\dagger}[E(X_{2})+E(Y_{2})]\\
   &=[eE(X_{1})+e^{\dagger}E(X_{2})]+[eE(Y_{1})+e^{\dagger}E(Y_{2})]\\
   &=E(X_{\mathbb{D}})+E(Y_{\mathbb{D}}).\\
   \end{split}
   \end{equation*}
    \end{proof}
  \item[(ii)] If  $X_{\mathbb{D}}$ is a $\mathbb{D}$-random variable and a , b are hyperbolic numbers, then $$E(aX_{\mathbb{D}}+b)= a E(X_{\mathbb{D}})+b.$$
  \begin{proof}
  Let \; $ X_{\mathbb{D}}= e X_{1}+ e^{\dagger} X_{2},a=ea_{1}+e^{\dagger}a_{2},b=eb_{1}+e^{\dagger}b_{2}.$
  
  \begin{equation*}
     \begin{split}
    Then\;\;\; E(aX_{\mathbb{D}}+b)&=E[(ea_{1}+e^{\dagger}a_{2})( e X_{1}+ e^{\dagger} X_{2})+(eb_{1}+e^{\dagger}b_{2})]\\
     &=E[(ea_{1}X_{1}+e^{\dagger}a_{2}X_{2})+(eb_{1}+e^{\dagger}b_{2})]\\
     &=E[e(a_{1}X_{1}+b_{1})+e^{\dagger}(a_{2}X_{2}+b_{2})]\\
     &=eE(a_{1}X_{1}+b_{1})+e^{\dagger}E(a_{2}X_{2}+b_{2})\\
     &=e[a_{1}E(X_{1})+b_{1}]+e^{\dagger}[a_{2}E(X_{2})+b_{2}] \textmd{ as $X_{1}$ and $X_{2}$ are real}\\
     & \textmd{ valued random variables and $a_{1},a_{2},b_{1},b_{2}$ are real numbers.}
     \end{split}
     \end{equation*}
     \begin{equation*}
          \begin{split}
          Therefore,\;
          E(aX_{\mathbb{D}}+b)&=e[a_{1}E(X_{1})+b_{1}]+e^{\dagger}[a_{2}E(X_{2})+b_{2}]\\
         &=(ea_{1}+e^{\dagger}a_{2})[eE(X_{1})+e^{\dagger}E(X_{2})]+(eb_{1}+e^{\dagger}b_{2})\\
         & = a E(X_{\mathbb{D}})+b.\\
          \end{split}
          \end{equation*}
          \end{proof}
       \item[(iii)] If  $X_{\mathbb{D}}$ and $Y_{\mathbb{D}}$ are $\mathbb{D}$-random variables and a, b are hyperbolic numbers, then 
       $$E(aX_{\mathbb{D}}+bY_{\mathbb{D}})=aE(X_{\mathbb{D}})+bE(Y_{\mathbb{D}}).$$That is, E is linear operator.
       This result can be generalized as if $X_{\mathbb{D}}^{1},X_{\mathbb{D}}^{2},\dots , X_{\mathbb{D}}^{n}$ are $\mathbb{D}$-random variables and $a_{1},a_{2}, \dots, a_{n}$ are hyperbolic numbers, then$$ E\left(\sum^{n}_{i=1}a_{i}X_{i}\right)= a_{i}\sum^{n}_{i=1}E(X_{i})$$
       \item[(iv)] The Product theorem. If  $X_{\mathbb{D}}$ and $Y_{\mathbb{D}}$ are $\mathbb{D}$-random variables,then $$E(X_{\mathbb{D}}.Y_{\mathbb{D}})=E(X_{\mathbb{D}}).E(Y_{\mathbb{D}}).$$
       \begin{proof}
      Let $X_{\mathbb{D}}=eX_{1}+e^{\dagger}X_{2}$ and
$Y_{\mathbb{D}}=eY_{1}+e^{\dagger}Y_{2}.$ Then
 \begin{equation*}
 \begin{split}
 E(X_{\mathbb{D}}.Y_{\mathbb{D}})
 & =E(eX_{1}Y_{1}+e^{\dagger}X_{2}Y_{2})\\
 & =eE(X_{1}.Y_{1})+e^{\dagger}E(X_{2}.Y_{2})\\
 & =e E(X_{1})E(Y_{1})+e^{\dagger}E(X_{2})E(Y_{2}),\\      
\end{split}
\end{equation*}
as $X_{i},Y_{i}$ are independent for i=1,2.\\
Therefore,\\$E(X_{\mathbb{D}}.Y_{\mathbb{D}})= [eE(X_{1})+e^{\dagger}E(X_{2})][E(X_{1})+e^{\dagger}E(X_{2})]= E(X_{\mathbb{D}}.Y_{\mathbb{D}}).$
This result can also be generalized for any number of variables.

\end{proof}
 \item[(v)] If  $X_{\mathbb{D}}$ is a $\mathbb{D}$-random variable, then $var(aX_{\mathbb{D}}+b)= a^{2}.var(X_{\mathbb{D}})$, where a and b are constants.
 \begin{proof}
 $var(aX_{\mathbb{D}}+b)= e\; var(a_{1}X_{1}+b_{1})+ e^{\dagger}\; var(a_{2}X_{2}+b_{2}),$ where$$ a= ea_{1}+ e^{\dagger} a_{2},\; b=eb_{1}+ e^{\dagger} b_{2} \text{ and}\;\;  X_{\mathbb{D}}=e X_{1} + e^{\dagger}X_{2}.$$
 Therefore,
 \begin{equation*}
 \begin{split}
var(aX_{\mathbb{D}}+b)
&=e a_{1}^{2}\; var(X_{1})+ e^{\dagger} a_{2}^{2}\; var(X_{2})\\
&=(e a_{1}^{2}+ e^{\dagger} a_{2}^{2})(e var(X_{1})+ e^{\dagger} var(X_{2}))\\
&= a^{2}\; var(X_{\mathbb{D}}).\\
 \end{split}
 \end{equation*}
  \end{proof}
\item[(vi)] If  $X_{\mathbb{D}}$ and $Y_{\mathbb{D}}$ are independent ${\mathbb{D}}$-random variables, then $$var(X_{\mathbb{D}}\pm Y_{\mathbb{D}})= var(X_{\mathbb{D}})\pm var(Y_{\mathbb{D}}).$$
\begin{proof}
\begin{equation*}
\begin{split}
var(X_{\mathbb{D}}\pm Y_{\mathbb{D}})
&=var[e( X_{1} \pm Y_{1})+ e^{\dagger}( X_{1} \pm Y_{1})]\\
&= e \; var( X_{1} \pm Y_{1})+ e^{\dagger}\;var( X_{2} \pm Y_{2})\\
&= e\;[ var(X_{1})\pm var(Y_{1})] + e^{\dagger}\; [var(X_{2})\pm var(Y_{2})]\\
&= [e\; var(X_{1})\pm e^{\dagger}\;var(X_{2})] \pm [e\; var(Y_{1})+ e^{\dagger}\;var(Y_{2})]\\
&=var(X_{\mathbb{D}})\pm var(Y_{\mathbb{D}}).\\
\end{split}
\end{equation*}
\end{proof}
\item[(vii)] Cauchy Schwartz inequality. If  $X_{\mathbb{D}}$ and $Y_{\mathbb{D}}$ are ${\mathbb{D}}$-random variables, then $$[E(X_{\mathbb{D}}Y_{\mathbb{D}})]^{2}\preceq E(X_{\mathbb{D}}^{2})E(Y_{\mathbb{D}}^{2}).$$
\begin{proof}
Let $X_{\mathbb{D}}=eX_{1}+e^{\dagger}X_{2}$ and
$Y_{\mathbb{D}}=eY_{1}+e^{\dagger}Y_{2}.$ Consider a hyperbolic valued function of hyperbolic variable h, defined by
\begin{equation*}
\begin{split}
 Z(h)=E(X_{\mathbb{D}}+h\;Y_{\mathbb{D}})^{2}
 &= e E(X_{1}+h_{1}\; Y_{1})^{2} +e^{\dagger} E(X_{2}+h_{2}\; Y_{2})^{2}\\
& = e\; Z(h_{1})+e^{\dagger}\;Z(h_{2}),\forall\; h= e h_{1}+e^{\dagger}h_{2}.\\
\end{split}
\end{equation*}
Clearly $Z(h)\in \mathbb{D}^{+},\forall\; h$ as $ (X_{\mathbb{D}}+h Y_{\mathbb{D}})^{2} \in \mathbb{D}^{+},\forall X_{\mathbb{D}},Y_{\mathbb{D}}$ and h.\\
Therefore,
$$ Z(h_{i})\geq 0, for \: i=1,2.$$
Now $Z(h_{i}),i=1,2$ are given by
\begin{equation*}
\begin{split}
Z(h_{i})
&=E(X_{i}^{2}+2h_{i}X_{i}Y_{i}+h_{i}^{2}Y_{i}^{2})\\
&= E(X_{i}^{2}) + 2h_{i}E(X_{i} Y_{i})+ h_{i}^{2}E(Y_{i}^{2}),\\
\end{split}
\end{equation*}
which are quadratic polynomials in $h_{1}$ and $h_{2}$ respectively.
This implies that the graphs of real polynomial functions $Z(h_{1})$ and $Z(h_{2})$ lie on or above $h_{1}$ and $h_{2}$-axis respectively.\\
Therefore,
 $$[2 E(X_{i} Y_{i})]^{2}-4 E(X_{i}^{2})E(Y_{i}^{2})\leq 0, i=1,2.$$ This implies that $$[ E(X_{i} Y_{i})]^{2}\leq E(X_{i}^{2})E(Y_{i}^{2}), i=1,2.$$
 and so\;\; $[E(X_{\mathbb{D}}Y_{\mathbb{D}})]^{2}\preceq E(X_{\mathbb{D}}^{2})E(Y_{\mathbb{D}}^{2}).$
 \end{proof}   
\begin{remark}
\item[(i)] Equality holds in above inequality
 \begin{equation*}
 \begin{split}
 \textmd{iff}\;\;E(X_{\mathbb{D}} + h Y_{\mathbb{D}})^{2}=0,\forall h.\\
 \textmd{iff}\;\;P_{\mathbb{D}}[(X_{\mathbb{D}} + h Y_{\mathbb{D}})^{2}=0] = \text{\thorn},\textmd{where}\;\; \text{\thorn}=1,e\;\;\textmd{or}\;\; e^{\dagger}.\\
 \textmd{iff}\;\;e P_{1}(X_{1} + h_{1} Y_{1}=0)+ e^{\dagger} P_{2}(X_{2} + h_{2} Y_{2}=0) = p.\\
 \end{split}  
 \end{equation*}
 \end{remark} 

For p=1, we have $P_{i}(X_{i} + h_{i} Y_{i}=0)=1$ \textmd{implies that} $P_{i}(Y_{i}=\frac{-X_{i}}{Y_{i}})=1$
Therefore,$$P_{i}(Y_{i} = k_{i} X_{i})=1,\;,i=1,2\; ,\textmd{where}\; k_{i}=\frac{-1}{h_{i}}.$$
For p=e, we have $P_{1}(X_{1} + h_{1} Y_{1}=0)=1$ and $P_{2}(X_{2} + h_{2} Y_{2}=0)=0$.This implies that $P_{1}(Y_{1} = k_{1} X_{1})=1$ and $P_{2}(Y_{2} = k_{2} X_{2})=0.$\\
For $p=e^{\dagger},$ we have $P_{1}(Y_{1} = k_{1} X_{1})=0$ and $P_{2}(Y_{2} = k_{2} X_{2})=1.$
\item[(ii)] Replacing $X_{\mathbb{D}}$ by $X_{\mathbb{D}} - E(X_{\mathbb{D}})$ and taking $Y_{\mathbb{D}}=1,$ we have 
$$[E(X_{\mathbb{D}} - E(X_{\mathbb{D}}))]^{2}\preceq [E(X_{\mathbb{D}} - E(X_{\mathbb{D}}))]^{2}.E(1)=p. \, var(X_{\mathbb{D}})$$ as $ E(1)=\int_{\Omega}1.dP_{\mathbb{D}}=P_{\mathbb{D}}(\Omega)=p.$ \\
Therefore, $ (\text{mean deviation about mean})^{2} \preceq p \, var(X_{\mathbb{D}}),$ which yields that $M.D.\preceq p. \, S.D.$
\end{enumerate}
\end{section}

%%%%%%%%%%%%%%%%%%%%%%%%%%%%%%%%%%%%%%%%%
%%%%%%%%%%%%%%%%%%%%%%%%%%%%%%%%%%%%%%%%
%%%%%%%%%%%%%%%%%%%%%%%%%%%%%%%%%%%%%%%

\begin{section}{Moments and Moment Generating function}
Now we define moments and moment generating function of a $ {\mathbb{D}}$- random variable.
\begin{definition}\label{def5}
The rth moment of a \; $ {\mathbb{D}}$- random variable about any point $X_{\mathbb{D}}=a,$ denoted by $\mu_{r}^{'}$ is given by
$\mu_{r}^{'}=E(X_{\mathbb{D}}-a)^{r}$ and rth moment of $X_{\mathbb{D}}$  about mean $ \mu_{\mathbb{D}} =E(X_{\mathbb{D}}),$ denoted by $\mu_{r}$ is given by
$\mu_{r}=E(X_{\mathbb{D}}-\mu_{\mathbb{D}})^{r}.$
\end{definition}
  
  \begin{definition}\label{def6}
  Let $X_{\mathbb{D}}=eX_{1}+e^{\dagger}X_{2}$ be a $ {\mathbb{D}}$- random variable taking countable values, where $X_{1}$ and $X_{2}$ are real random variables taking countable values with moment generating functions $ M_{X_{1}}(t_{1})$ and $ M_{X_{2}}(t_{2})$ respectively.Then moment generating function of $X_{\mathbb{D}}$ (if it exists), denoted by $ M_{X_{\mathbb{D}}}(t),$ is defined as $$ M_{X_{\mathbb{D}}}(t)= e M_{X_{1}}(t) + e^{\dagger}M_{X_{2}}(t),$$ where $ t=et_{1} + e^{\dagger}t_{2}.$ 
  \end{definition}
  We have $$ M_{X_{\mathbb{D}}}(t)= e \sum^{\infty}_{k=0}\frac {t_{1}^{k}}{k!}\lambda_{k}^{'} +  e^{\dagger} \sum^{\infty}_{k=0}\frac {t_{2}^{k}}{k!}\delta_{k}^{'}, $$ where $\lambda_{k}^{'}$ and $\delta_{k}^{'}$ are kth moments about origin of $X_{1}$  and $X_{2}$ respectively.\\
  
  Therefore,
  \begin{equation} {\label{m.g.f}} 
    M_{X_{\mathbb{D}}}(t)
    = \sum^{\infty}_{k=0}\frac { e t_{1}^{k} + e^{\dagger} t_{2}^{k}}{k!} (e \lambda_{k}^{'} +  e^{\dagger} \delta_{k}^{'})
    = \sum^{\infty}_{k=0}\frac {t^{k}}{k!} \mu_{k}^{'},
  \end{equation} where $t^{k}=e t_{1}^{k}+ e^{\dagger} t_{2}^{k},\; \mu_{k}^{'}= e \lambda_{k}^{'}+ e^{\dagger}\delta_{k}^{'}$ is the kth moment about origin of $ X_{\mathbb{D}},$ k=1,2,3, $\dots$
  Thus we see that the coefficient of $\frac {t_{1}^{k}}{k!}$ in $ M_{X_{\mathbb{D}}}(t)$ gives $\mu_{k}^{'}.$ Since 
  $ M_{X_{\mathbb{D}}}(t)$ generates moments, it is known as moment generating function.\\
  Differentiating (\ref{m.g.f}) with respect to t and putting t=0, we get$$ [\frac{d^{r}}{dt^{r}}{M_{X_{\mathbb{D}}}(t)}]_{t=0}=[\frac {{\mu_{r}}^{'}}{r!}.r! + t {\mu_{r+1}}^{'}+ \frac{t^{2}}{2!}{\mu_{r+1}}^{'}+ \dots]_{t=0}={\mu_{r}}^{'}$$
  This implies that
 $$ {\mu_{r}}^{'}=[\frac{d^{r}}{dt^{r}}{M_{X_{\mathbb{D}}}(t)}]_{t=0}.$$The moment generating function $ M_{X_{\mathbb{D}}}(t)$ of a $ \mathbb{D}$-random variable satisfies the following properties:
 \begin{enumerate}
 \item[(i)]$ M_{h{X_{\mathbb{D}}}}(t)=  M_{X_{\mathbb{D}}}(ht),$ where $h=eh_{1}+e^{\dagger}h_{2}$ is a hyperbolic number.
 \begin{proof}
 $ M_{h{X_{\mathbb{D}}}}(t)= e M_{h_{1}{X_{1}}}(t_{1}) + e^{\dagger} M_{h_{2}{X_{2}}}(t_{2}) =  e M_{X_{1}}(h_{1}t_{1}) + e^{\dagger} M_{h_{2}{X_{2}}}(h_{2}t_{2}).$ 
 Therefore,$$ M_{h{X_{\mathbb{D}}}}(t)=  M_{X_{\mathbb{D}}}(ht).$$ 
 \end{proof}
 \item[(ii)]The moment generating function of sum of number of independent $ \mathbb{D}$-random variables is equal to the product of their respective moment generating functions.
 \begin{proof}
  Let $ X_{\mathbb{D}}^{1}, X_{\mathbb{D}}^{2},\dots  X_{\mathbb{D}}^{n}$ be $ \mathbb{D}$-random variables which are independent.Then $ X_{\mathbb{D}}^{k}=e X_{1}^{k}+ e^{\dagger} X_{2}^{k},$ k=1,2,$\dots ,n.$
  Therefore,
   $$ \sum^{n}_{k=1}X_{\mathbb{D}}^{k}
    =\sum^{n}_{k=1}(e X_{1}^{k}+ e^{\dagger} X_{2}^{k})
    =e\sum^{n}_{k=1}X_{1}^{k}+e^{\dagger}\sum^{n}_{k=1}X_{2}^{k}.$$
    Now
    \begin{equation*}
    \begin{split}
              M_{\sum^{n}_{k=1}X_{\mathbb{D}}^{k}}(t)
      & = e M_{ \sum^{n}_{k=1} X_{1}^{k}}(t_{1})+
       e M_{ \sum^{n}_{k=1} X_{2}^{k}}(t_{2})\\
      & = e\prod^{n}_{k=1}M_{X_{1}^{k}}(t_{1})+ e^{\dagger}\prod^{n}_{k=1}M_{X_{2}^{k}}(t_{2})\\
   \end{split}
    \end{equation*}
    because $ M_{\sum^{n}_{i=1}X_{i}}(t)= \prod^{n}_{i=1}M_{X_{i}}(t)$ if $X_{1},X_{2},\dots,X_{n}$ are real independent random variables.Thus
    $$ M_{\sum^{n}_{k=1}X_{\mathbb{D}}^{k}}(t)=\prod^{n}_{k=1} [e M_{X_{1}}^{k}(t_{1})+ e^{\dagger} M_{X_{2}}^{k}(t_{2})]= \prod^{n}_{k=1}M_{X_{\mathbb{D}}}^{k}(t).$$
    \item[(iii)] The moment generating function of a $ \mathbb{D}$-random variable, if it exists, uniquely determines the distribution of $ \mathbb{D}$- random variable. Hence $ M_{X_{\mathbb{D}}}(t)=M_{Y_{\mathbb{D}}}(t)$ iff  $X_{ \mathbb{D}}$ and $Y_{ \mathbb{D}}$ are identically distributed.
    
   \end{proof}  
 \end{enumerate} 
\end{section}
  %%%%%%%%%%%%%%%%%%%%%%%%%%%%%%%%%%%%%%%%%
 %%%%%%%%%%%%%%%%%%%%%%%%%%%%%%%%%%%%%%%%
 %%%%%%%%%%%%%%%%%%%%%%%%%%%%%%%%%%%%%%% 
 \begin{section}{Hyperbolic valued probability distributions}
 
 \subsection*{Hyperbolic Bernoulli distribution}
 Let $X_{1}$ and $X_{2}$ be two Bernoulli variables on the same measurable space $(\Omega,\Sigma).$ Let $f_{1}(x_{1})=(1-p_{1})^{1-x_{1}}p_{1}^{x_{1}}$ and $f_{2}(x_{2})=(1-p_{2})^{1-x_{2}}p_{2}^{x_{2}}$ be their respective probability functions, $ 0\leq p_{i} \leq 1,i=1,2$ and $x_{i}=0,1$ for i=1,2.\\
 Then the $ \mathbb{D}$- random variable $X_{\mathbb{D}}=(eX_{1}+e^{\dagger}X_{2}) \text{\thorn}$ having $ \mathbb{D}$-probability function given by
 $$f_{\mathbb{D}}(X_{\mathbb{D}}=x)=[e f_{1}(x)+e^{\dagger}f_{2}(x)] \text{\thorn}$$ is called $ \mathbb{D}$-Bernoulli variable and the probability distribution of $X_{\mathbb{D}}$ is called hyperbolic bernoulli distribution,where \thorn \;  takes one of the values 1,e or $e^{\dagger}.$     
  \begin{remark}
  \begin{enumerate}
\item[(i)]  If $ X_{1}=X_{2}=X_{\mathbb{D}}$ and \thorn=1,\;then$ X_{\mathbb{D}}$ is a real valued bernoulli variable.
\item[(ii)] If \thorn = e, then $ X_{\mathbb{D}}=e X_{1}$ and $P_{\mathbb{D}}(X_{\mathbb{D}}=x)=eP_{1}(x).$
\item[(iii)]If \thorn = $e^{\dagger},$ then  $X_{\mathbb{D}}=e^{\dagger} X_{2}$ and $P_{\mathbb{D}}(X_{\mathbb{D}}=x)=e^{\dagger}P_{2}(x).$ 
  \end{enumerate}
  \end{remark}
  \subsection*{Moments of hyperbolic bernoulli distribution}
  The rth moment about origin is 
  \begin{equation*}
  \mu_{r}^{'}=E({X_{\mathbb{D}}}^{r})=E[(e{X_{1}}^{r}+e^{\dagger}{X_{2}}^{r}) \text{\thorn}^{r}]\\
  = e \text{\thorn}^{r} E({X_{1}}^{r})+ e^{\dagger} \text{\thorn}^{r} E({X_{2}}^{r})\\
  = e \text{\thorn}^{r} p_{1}+ e^{\dagger} \text{\thorn}^{r} p_{2}\\
  \end{equation*}
  Therfore,\\
 $$ \mu_{r}^{'} = \text{\thorn}^{r} (e p_{1}+e^{\dagger} p_{2}), \textmd{where}\; E(X_{i}) = p_{i},i=1,2.$$
 \begin{enumerate}
 \item[(i)] If \thorn = 1,\; then $ \mu_{r}^{'} = e p_{1}+e^{\dagger} p_{2}.$
 Therefore,
$$ \mu_{1}^{'} = e p_{1}+e^{\dagger} p_{2}\; \textmd{and} \; \mu_{2}^{'} = e p_{1}+e^{\dagger} p_{2}. $$ This gives $$ E(X_{\mathbb{D}})= \mu_{1}^{'} = e p_{1}+e^{\dagger} p_{2} =e E(X_{1}) + e^{\dagger} E(X_{2})$$ and
 \begin{equation*}
 \begin{split}
  var(X_{\mathbb{D}})
  & = \mu_{2}^{'}- { \mu_{1}^{'}}^{2}\\
  & = e p_{1}(1-p_{1})+e^{\dagger} p_{2}(1-p_{2})\\
  & = e p_{1} q_{1}+e^{\dagger}       p_{2} q_{2}\\
& = e var(X_{1})+e^{\dagger} var(X_{2}),\\
\end{split}
\end{equation*}
where $ q_{i}=1-p_{i},$ i=1,2. 
 \item[(ii)] If \thorn = e, then $X_{\mathbb{D}} = e X_{1}$ and $ \mu_{r}^{'}=e p_{1}.$This gives $$E(X_{\mathbb{D}})= e. p_{1}= e E(_{1})\; \textmd{and}\; var(X_{\mathbb{D}})= e( p_{1}-{p_{1}}^{2})=e p_{1}(1-p_{1})=e p_{1} q_{1}=e. var(X_{1}).$$
 \item[(iii)] If \thorn $= e^{\dagger},$ then $X_{\mathbb{D}} = e^{\dagger} X_{2}.$ This gives $$E(X_{\mathbb{D}})= e^{\dagger}. p_{2}= e^{\dagger}E(X_{2})\; \textmd{and}\; var(X_{\mathbb{D}})= e^{\dagger}. var(X_{2}).$$
 \end{enumerate}
 \subsection*{Hyperbolic Binomial distribution}
 Let $X_{1}$ and $X_{2}$ be two Binomial variables with probability functions\\  $f_{1}(x_{1})=C(n_{1},x_{1})p_{1}^{x_{1}}q_{1}^{n_{1}-x_{1}}$ and $ f_{2}(x_{2})= C(n_{2},x_{2})p_{2}^{x_{2}}q_{2}^{n_{2}-x_{2}}$ respectively, where $ x_{i}=0,1,2, \dots,n_{i},i=1,2.$ Here $ 0 \leq p_{i}\leq 1 $ and $ q_{i}=1-p_{i},i=1,2.$ The numbers $n_{i},p_{i}$ are parameters of distributions of $X_{i},$ i=1,2.\\
  The $ \mathbb{D}$- random variable $X_{\mathbb{D}}=(eX_{1}+e^{\dagger}X_{2}) \text{\thorn}$ taking $ n_{1}.n_{2} $  values in $\mathbb{D}^{+} \cup \left\lbrace 0 \right\rbrace $ is said to be $\mathbb{D}$-Binomial variable if its $\mathbb{D}$-probability function is given by
  $$f_{\mathbb{D}}(X_{\mathbb{D}}=x) = \left\{ \begin{array}{l}
  [e f_{1}(x) + e^{\dagger} f_{2}(x)] \text{\thorn} , x=e x_{1}+e^{\dagger} x_{2};\\
  0,\;\;\;\;\;\;\;\;\;\;\;\;\; otherwise.\\  
  \end{array}\right.$$
  Here $n= e n_{1} +e^{\dagger} n_{2}, p= e p_{1} + e^{\dagger} p_{2},$ are called parameters of the distribution and \thorn \; can take any of the values 1,e or $ e^{\dagger}.$
 \subsection*{Hyperbolic Poisson distribution}
 Let $X_{1}$ and $X_{2}$ be two Poisson variables with probability functions $$\displaystyle{f_{1}(x_{1})= \frac{\exp{-\lambda_{1}}{\lambda_{1}}^{x_{1}}} {{x_{1}}!}}\;\;\text{and}\;\;  \displaystyle{f_{2}(x_{2})= \frac{\exp{-\lambda_{2}}{\lambda_{2}}^{x_{2}}} {{x_{2}}!}}$$ respectively, where $x_{i}=0,1,2, \dots;\lambda_{i}>0,i=1,2.$ Then
 the $ \mathbb{D}$- random variable $X_{\mathbb{D}}=(eX_{1}+e^{\dagger}X_{2}) \text{\thorn}$ having $ \mathbb{D}$-probabilistic measure given by
  $$f_{\mathbb{D}}(X_{\mathbb{D}}=x)=[e f_{1}(x)+e^{\dagger}f_{2}(x)] \text{\thorn},\; x=e x_{1}+e^{\dagger} x_{2} \in \mathbb{D}^{+} \cup \left\lbrace 0 \right\rbrace   $$ is called $\mathbb{D}$-Poisson variable and the distribution is called Hyperbolic Poisson distribution, where \thorn =1,\;e or \;$ e^{\dagger}.$
  \begin{remark}
 For $X_{1}=X_{2}=X_{\mathbb{D}}$ and \thorn =1,these distributions are usual Binomial and Poisson distributions respectively.\\
  \end{remark}
    Hyperbolic binomial distribution tends to Hyperbolic poisson distribution under the following conditions:
  \begin{enumerate}
  \item[(i)] $n_{i}\rightarrow \infty,i=1,2.$
  \item[(ii)] $p_{i}\rightarrow 0,i=1,2.$
  \item[(iii)] $n_{i}p_{i}=\lambda_{i}$(say) is finite,\;i=1,2.
  \end{enumerate}
   Let $X_{\mathbb{D}}=eX_{1}+e^{\dagger}X_{2}$ and $Y_{\mathbb{D}}=eY_{1}+e^{\dagger}Y_{2}$ be $\mathbb{D}$-Poisson variates.Then
   \begin{equation*}
   \begin{split}
    M_{X_{\mathbb{D}} +Y_{\mathbb{D}}} &=M_{e(X_{1}+Y_{1})+e^{\dagger}(X_{2}+Y_{2})}(et_{1}+e^{\dagger}t_{2})\\
    &= e M_{X_{1}+Y_{1}}(t) + e^{\dagger} M_{X_{2}+Y_{2}}(t),\\ 
    \end{split}
   \end{equation*}
   which is moment generating function of $\mathbb{D}$-Poisson variable as $X_{1}+Y_{1}$ and
   $X_{2}+Y_{2}$ are Poisson variables.\\
   This shows that $X_{\mathbb{D}} +Y_{\mathbb{D}}$ is a $\mathbb{D}$-Poisson variable which is additive property of $\mathbb{D}$-Poisson variables. 
   \end{section}
   %%%%%%%%%%%%%%%%%%%%%%%%%%%%%%%%%%%%%%%%%
   %%%%%%%%%%%%%%%%%%%%%%%%%%%%%%%%%%%%%%%%
   %%%%%%%%%%%%%%%%%%%%%%%%%%%%%%%%%%%%%%%
   \begin{section}{Conditional Expectation}
   In this section, we define hyperbolic valued conditional expectation. The results and their proofs in this chapter are essentially based on the work \cite [chapter 1] {MMrao}.  
   \begin{definition}
   Let $X_{\mathbb{D}}$ be a $\mathbb{D}$-random variable on a $\mathbb{D}$-probabilistic space $(\Omega,\Sigma,P_{\mathbb{D}}).$ If an event B, with $P_{\mathbb{D}}(B)\succ 0 $ has taken place, then the conditional expectation of $ X_{\mathbb{D}}$ given B, denoted by $ E_{B}(X_{\mathbb{D}})$ or $ E(X_{\mathbb{D}}/B)$ is defined as:
   \begin{enumerate}
   \item[(i)] $E_{B}(X_{\mathbb{D}})=\frac{1}{P_{\mathbb{D}}(B)} \int_{B} X_{\mathbb{D}}(w) dP_{\mathbb{D}}(w) $ if $ P_{\mathbb{D}}(B)\succ 0 $   and $P_{\mathbb{D}}(B) \notin \mathfrak{S}_{\mathbb{D}};$
   \item[(ii)] $E_{B}(X_{\mathbb{D}})=E(X_{\mathbb{D}})$ if $  P_{\mathbb{D}}(B)=0; $
   \item[(iii)] $E_{B}(X_{\mathbb{D}})= \int_{B} \frac{X_{\mathbb{D}}(w) dP_{\mathbb{D}}(w)}{\lambda_{1}}e + E(X_{\mathbb{D}})e^{\dagger} $ if $ P_{\mathbb{D}}(B)=\lambda_{1}e, \lambda_{1}>0;$
   \item[(iv)] $E_{B}(X_{\mathbb{D}})= E(X_{\mathbb{D}})e + \int_{B} \frac{X_{\mathbb{D}}(w) dP_{\mathbb{D}}(w)}{\lambda_{2}}e^{\dagger}$ if $ P_{\mathbb{D}}(B)=\lambda_{2}e^{\dagger}, \lambda_{2}>0.$
    \end{enumerate}
    \end{definition}
   The items (iii) and (iv) are in complete agreement with (i).\\Writing $ E(X_{\mathbb{D}})= e E(X_{1}) + e^{\dagger} E(X_{2})$ and item (i) in idempotent representation reads:
   $$ E_{B}(X_{\mathbb{D}})= \frac{e}{P_{1}(B)} \int_{B} X_{1}(w) dP_{1}(w) + \frac{e^{\dagger}}{P_{2}(B)} \int_{B} X_{2}(w) dP_{2}(w)= e E_{B}(X_{1})+ e^{\dagger} E_{B}(X_{2}).$$
   Now items (iii) and (iv) read:
   \begin{equation*}
   \begin{split}
   E_{B}(X_{\mathbb{D}})
   & = \int_{B} \frac{X_{\mathbb{D}}(w) dP_{\mathbb{D}}(w)}{\lambda_{1}}e + E(X_{\mathbb{D}})e^{\dagger}\\
   & = \frac { e \int_{B} X_{1}(w) dP_{1}(w) + e^{\dagger} \int_{B} { X_{2}(w) dP_{2}(w)}}{\lambda_{1}} e + [e E(X_{1}) + e^{\dagger} E(X_{2})] e^{\dagger}\\
   & = \frac{e}{P_{1}(B)} \int_{B}  X_{1}(w) dP_{1}(w) + e^{\dagger} E(X_{2})\\
   & = e E_{B}(X_{1}) + e^{\dagger} E_{B}(X_{2})\\  
   \end{split}
   \end{equation*}
   and
   \begin{equation*}
   \begin{split}
 E_{B}(X_{\mathbb{D}})
    & = E(X_{\mathbb{D}})e + \int_{B} \frac{X_{\mathbb{D}}(w) dP_{\mathbb{D}}(w)}{\lambda_{2}}e^{\dagger}\\
    & = E(X_{1})e + \int_{B} \frac{X_{2}(w) dP_{2}(w)}{P_{2}(B)}e^{\dagger}\\
    & = e E_{B}(X_{1}) + e^{\dagger} E_{B}(X_{2}).\\
   \end{split}
   \end{equation*}
   The conditional expectation of a $ \mathbb{D}$-random variable $ X_{\mathbb{D}} = e X_{1} + e^{\dagger} X_{2}$ can be written as
   $$ E_{B}(X_{\mathbb{D}})= e E_{B}(X_{1}) + e^{\dagger} E_{B}(X_{2})$$
   \begin{theorem}
   Let A and B be two mutually exclusive events, then
   \begin{equation}
   \label{eq1}
   P_{\mathbb{D}}(A \cup B) E_{A\cup B}(X_{\mathbb{D}})=  [P_{\mathbb{D}}(A)  E_{A}(X_{\mathbb{D}}) + P_{\mathbb{D}}(B)  E_{B}(X_{\mathbb{D}})].
   \end{equation}
   \end{theorem}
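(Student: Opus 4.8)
The plan is to reduce the hyperbolic identity to a pair of real-valued identities by passing to the idempotent components $e$ and $e^{\dagger}$, and then to settle the real case by additivity of the Lebesgue integral over disjoint sets. This is the natural route because, as established just before the theorem, the conditional expectation always admits the decomposition $E_B(X_{\mathbb{D}}) = e\,E_B(X_1) + e^{\dagger} E_B(X_2)$ regardless of whether $P_{\mathbb{D}}(B)$ is a zero divisor, so the branching in Definition of $E_B$ disappears once one works component-wise.

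First I would write $X_{\mathbb{D}} = e X_1 + e^{\dagger} X_2$ and $P_{\mathbb{D}} = e P_1 + e^{\dagger} P_2$, and expand the left-hand side of (\ref{eq1}) using $e^2 = e$, $(e^{\dagger})^2 = e^{\dagger}$ and $e\,e^{\dagger} = 0$:
\begin{equation*}
P_{\mathbb{D}}(A \cup B)\, E_{A \cup B}(X_{\mathbb{D}}) = e\, P_1(A\cup B)\, E_{A\cup B}(X_1) + e^{\dagger} P_2(A\cup B)\, E_{A\cup B}(X_2).
\end{equation*}
Expanding the right-hand side in the same way, the cross terms again vanish, so (\ref{eq1}) is equivalent to the two real scalar identities
\begin{equation*}
P_i(A \cup B)\, E_{A\cup B}(X_i) = P_i(A)\, E_A(X_i) + P_i(B)\, E_B(X_i), \qquad i = 1, 2.
\end{equation*}
To prove each of these, I would use the fact that for the real probability measure $P_i$ and any event $C$ one has $P_i(C)\, E_C(X_i) = \int_C X_i \, dP_i$: this holds when $P_i(C) > 0$ directly from the definition of $E_C(X_i)$, and it holds trivially when $P_i(C) = 0$ since then both sides vanish (using the convention $E_C(X_i) = E(X_i)$ from item (ii)). Since $A$ and $B$ are mutually exclusive, additivity of the integral over the disjoint union gives $\int_{A \cup B} X_i \, dP_i = \int_A X_i \, dP_i + \int_B X_i \, dP_i$, which is exactly the required scalar identity. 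Recombining the two components with $e$ and $e^{\dagger}$ then yields (\ref{eq1}).

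The only point demanding care — and the one I expect to be the main obstacle — is the bookkeeping of the degenerate and zero-divisor cases: one must verify that the formula $P_i(C)\, E_C(X_i) = \int_C X_i \, dP_i$ is valid uniformly across all four branches of the definition of conditional expectation, so that no separate case analysis in $\mathbb{D}$ is required. Because the idempotent decomposition collapses every branch to this single real identity, the argument reduces to a one-line integral computation in each component, rather than a case split over whether $P_{\mathbb{D}}(A)$, $P_{\mathbb{D}}(B)$ or $P_{\mathbb{D}}(A \cup B)$ lands in $\mathfrak{S}_{\mathbb{D}}$.
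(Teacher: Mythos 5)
Your proposal is correct, and it takes a genuinely different route from the paper. The paper proves the identity by an exhaustive case analysis at the hyperbolic level: it splits on whether $P_{\mathbb{D}}(A\cup B)$ is positive and invertible, zero, of the form $\lambda_{1}e$, or of the form $\lambda_{2}e^{\dagger}$, and within each case further splits on the values of $P_{\mathbb{D}}(A)$ and $P_{\mathbb{D}}(B)$, verifying the identity branch by branch from the four-part definition of $E_{B}(X_{\mathbb{D}})$. You instead factor everything through the single scalar identity $P_{i}(C)\,E_{C}(X_{i})=\int_{C}X_{i}\,dP_{i}$, valid uniformly because both sides vanish when $P_{i}(C)=0$ (the integral over a $P_{i}$-null set is zero, and the convention $E_{C}(X_{i})=E(X_{i})$ is multiplied by zero), and then conclude by additivity of the integral over the disjoint union. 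The one hypothesis your argument rests on --- that $E_{C}(X_{\mathbb{D}})=e\,E_{C}(X_{1})+e^{\dagger}E_{C}(X_{2})$ holds across all four branches of the definition --- is exactly what the paper verifies in the discussion immediately preceding the theorem, so you are entitled to it. What each approach buys: the paper's case analysis displays concretely how the zero-divisor branches of the definition interact with the measure of the union, which has expository value in a paper introducing these definitions; your reduction is shorter and, notably, more complete --- the paper's case (a) lists only three subcases and omits the mixed situation $P_{\mathbb{D}}(A)=\mu_{1}e$, $P_{\mathbb{D}}(B)=\mu_{2}e^{\dagger}$ with $\mu_{1},\mu_{2}>0$, where the union has invertible probability even though both summands are zero divisors, whereas your uniform identity covers this automatically. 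Your argument also scales directly to the countable-partition identity (\ref{eqn3}) that the paper later proves by another case analysis.
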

   \begin{proof}
   Consider the different cases that arise.
   \begin{enumerate}
   \item[(a)] If $P_{\mathbb{D}}(A \cup B)\succ 0$ and $P_{\mathbb{D}}(A \cup B) \notin \mathfrak{S}_{\mathbb{D}},$ then
   \begin{equation}\label{eqn2}
   \begin{split}
   E_{A\cup B}(X_{\mathbb{D}})
    & = \frac{1}{P_{\mathbb{D}}(A \cup B)} \int_{A \cup B} X_{\mathbb{D}}(w) dP_{\mathbb{D}}(w)\\
    & = \frac{1}{P_{\mathbb{D}}(A \cup B)}\left[ \int_{A} X_{\mathbb{D}}(w) dP_{\mathbb{D}}(w) +  \int_{B} X_{\mathbb{D}}(w) dP_{\mathbb{D}}(w)\right].\\
   \end{split}
   \end{equation}
   \subsubitem(i) If $P_{\mathbb{D}}(A)\succ 0, P_{\mathbb{D}}(B) \succ 0$ and $P_{\mathbb{D}}(A),P_{\mathbb{D}}(B) \notin \mathfrak{S}_{\mathbb{D}},$ then from $\ref{eqn2},$ we have 
    $$ E_{A\cup B}(X_{\mathbb{D}}) = \frac{1}{P_{\mathbb{D}}(A \cup B)} [ P_{\mathbb{D}}(A)E_{A}(X_{\mathbb{D}} + P_{\mathbb{D}}(B)E_{B}(X_{\mathbb{D}})].$$
   \subitem(ii) If $P_{\mathbb{D}}(A)\succ 0, P_{\mathbb{D}}(B) = 0$ and $P_{\mathbb{D}}(A) \notin \mathfrak{S}_{\mathbb{D}},$ then \begin{equation*}
   \begin{split}
      E_{A\cup B}(X_{\mathbb{D}})
      &= \frac{1}{P_{\mathbb{D}}(A \cup B)}\left[ \int_{A} X_{\mathbb{D}}(w) dP_{\mathbb{D}}(w)+0\right]\\
      &= \frac{1}{P_{\mathbb{D}}(A \cup B)} [P_{\mathbb{D}}(A)E_{A}(X_{\mathbb{D}}) + P_{\mathbb{D}}(B)E_{B}(X_{\mathbb{D}})].\\
       \end{split}
       \end{equation*}
   \subsubitem(iii) If $P_{\mathbb{D}}(B)\succ 0, P_{\mathbb{D}}(A) = 0$ and $P_{\mathbb{D}}(B) \notin \mathfrak{S}_{\mathbb{D}},$ then
   \begin{equation*}
   \begin{split} 
      E_{A\cup B}(X_{\mathbb{D}})
    & = \frac{1}{P_{\mathbb{D}}(A \cup B)}\left[0 +  \int_{B} X_{\mathbb{D}}(w) dP_{\mathbb{D}}(w)\right]\\
    & = \frac{1}{P_{\mathbb{D}}(A \cup B)} [ P_{\mathbb{D}}(A). E_{A}(X_{\mathbb{D}}) + P_{\mathbb{D}}(B)  E_{B}(X_{\mathbb{D}}) ].\\
     \end{split}
     \end{equation*} 
     In all the cases,\\
      $$ E_{A\cup B}(X_{\mathbb{D}})=\frac{1}{P_{\mathbb{D}}(A \cup B)} [P_{\mathbb{D}}(A)E_{A}(X_{\mathbb{D}}) + P_{\mathbb{D}}(B)E_{B}(X_{\mathbb{D}})].$$
      Thus
           $$P_{\mathbb{D}}(A \cup B) E_{A\cup B}(X_{\mathbb{D}})= P_{\mathbb{D}}(A)E_{A}(X_{\mathbb{D}}) + P_{\mathbb{D}}(B)E_{B}(X_{\mathbb{D}}).$$
      \item[(b)] If $P_{\mathbb{D}}(A \cup B)= 0,$ then $ P_{\mathbb{D}}(A)= P_{\mathbb{D}}(B)= 0.$ The result holds in this case.
      \item[(c)] If $P_{\mathbb{D}}(A \cup B)= \lambda_{1}\; e$ with $\lambda_{1}> 0,$ then $P_{\mathbb{D}}(A)= \mu_{1}\; e$ and $P_{\mathbb{D}}(B)= \mu_{2}\; e$ with $\mu_{1}>0$ or $\mu_{2}>0$ or both $ \mu_{1}, \mu_{2}> 0,$ where $ \mu_{1} + \mu_{2}= \lambda_{1}.$
      \subsubitem(i) If $\mu_{1}>0$ and $\mu_{2}=0,$ then $ P_{\mathbb{D}}(B)=0.$\\
      Now
\begin{equation*}
\begin{split}
P_{\mathbb{D}}(A \cup B) E_{A\cup B}(X_{\mathbb{D}})
& = \lambda_{1} e \left[ \int_{A \cup B} \frac{X_{\mathbb{D}}(w) dP_{\mathbb{D}}(w)}{\lambda_{1}}e + E(X_{\mathbb{D}})e^{\dagger}\right]\\
& =  e  \int_{A \cup B} X_{\mathbb{D}}(w) dP_{\mathbb{D}}(w) \\ 
& = e \left[ \int_{A} X_{\mathbb{D}}(w) dP_{\mathbb{D}}(w)+  \int_{B} X_{\mathbb{D}}(w) dP_{\mathbb{D}}(w)  \right]\\
& = e  \int_{A} X_{\mathbb{D}}(w) dP_{\mathbb{D}}(w)  \; \textmd{as} \;  P_{\mathbb{D}}(B)=0. \\
\end{split}
\end{equation*}
and
\begin{equation*}
\begin{split}
P_{\mathbb{D}}(A)  E_{A}(X_{\mathbb{D}}) + P_{\mathbb{D}}(B)  E_{B}(X_{\mathbb{D}})
& = P_{\mathbb{D}}(A)  E_{A}(X_{\mathbb{D}}) \\
& =  \mu_{1}\; e \left[ \int_{A} \frac {X_{\mathbb{D}}(w) dP_{\mathbb{D}}(w)}{\mu_{1}} e + E(X_{\mathbb{D}}) e^{\dagger} \right] \\ 
& = e  \int_{A} X_{\mathbb{D}}(w) dP_{\mathbb{D}}(w).\\
\end{split}
\end{equation*}
Therefore $$P_{\mathbb{D}}(A \cup B) E_{A\cup B}(X_{\mathbb{D}})=  P_{\mathbb{D}}(A)  E_{A}(X_{\mathbb{D}}) + P_{\mathbb{D}}(B)  E_{B}(X_{\mathbb{D}}).$$
 \subsubitem(ii) If $\mu_{1}=0$ and $\mu_{2}>0,$ then $ P_{\mathbb{D}}(A)=0.$
      Now
\begin{equation*}
\begin{split}
P_{\mathbb{D}}(A \cup B) E_{A\cup B}(X_{\mathbb{D}})
& = \lambda_{1} e \left[ \int_{A \cup B} \frac{X_{\mathbb{D}}(w) dP_{\mathbb{D}}(w)}{\lambda_{1}}e + E(X_{\mathbb{D}})e^{\dagger}\right]\\
& =  e  \int_{A \cup B} X_{\mathbb{D}}(w) dP_{\mathbb{D}}(w) \\ 
& = e \left[ \int_{A} X_{\mathbb{D}}(w) dP_{\mathbb{D}}(w)+  \int_{B} X_{\mathbb{D}}(w) dP_{\mathbb{D}}(w)  \right]\\
& = e  \int_{B} X_{\mathbb{D}}(w) dP_{\mathbb{D}}(w)  \; \textmd{as} \;  P_{\mathbb{D}}(A)=0. \\
\end{split}
\end{equation*}
and
\begin{equation*}
\begin{split}
P_{\mathbb{D}}(A)  E_{A}(X_{\mathbb{D}}) + P_{\mathbb{D}}(B)  E_{B}(X_{\mathbb{D}})
& = P_{\mathbb{D}}(B)  E_{B}(X_{\mathbb{D}}) \\
& =  \mu_{2}\; e \left[ \int_{B} \frac {X_{\mathbb{D}}(w) dP_{\mathbb{D}}(w)}{\mu_{2}} e + E(X_{\mathbb{D}}) e^{\dagger} \right] \\ 
& = e  \int_{B} X_{\mathbb{D}}(w) dP_{\mathbb{D}}(w).\\
\end{split}
\end{equation*}
Therefore $$P_{\mathbb{D}}(A \cup B) E_{A\cup B}(X_{\mathbb{D}})=  P_{\mathbb{D}}(A)  E_{A}(X_{\mathbb{D}}) + P_{\mathbb{D}}(B)  E_{B}(X_{\mathbb{D}}).$$
\subsubitem(iii) If $\mu_{1}>0$ and $\mu_{2}>0,$ then $ P_{\mathbb{D}}(A)=\mu_{1}e$ and $ P_{\mathbb{D}}(B)=\mu_{2}e$ 
      Now
\begin{equation*}
\begin{split}
&  P_{\mathbb{D}}(A)  E_{A}(X_{\mathbb{D}}) + P_{\mathbb{D}}(B)  E_{B}(X_{\mathbb{D}}) \\
& = \mu_{1}e \left[\int_{A} \frac{X_{\mathbb{D}}(w) dP_{\mathbb{D}}(w)}{\mu_{1}}e + E(X_{\mathbb{D}})e^{\dagger}\right] + \mu_{2}e \left[\int_{B} \frac{X_{\mathbb{D}}(w) dP_{\mathbb{D}}(w)}{\mu_{2}}e + E(X_{\mathbb{D}})e^{\dagger}\right]\\
& =  e \left [\int_{A} X_{\mathbb{D}}(w) dP_{\mathbb{D}}(w) + \int_{B} X_{\mathbb{D}}(w) dP_{\mathbb{D}}(w)\right]\\
\end{split}
\end{equation*} and
\begin{equation*}
\begin{split}
&  P_{\mathbb{D}}(A \cup B)  E_{A \cup B}(X_{\mathbb{D}}) \\
& = \lambda_{1}e \left[\int_{A \cup B } \frac{X_{\mathbb{D}}(w) dP_{\mathbb{D}}(w)}{\lambda_{1}}e + E(X_{\mathbb{D}})e^{\dagger}\right] \\
& = e \int_{A \cup B } \frac{X_{\mathbb{D}}(w) dP_{\mathbb{D}}(w)}{\lambda_{1}} \\
& =  e \left [\int_{A} X_{\mathbb{D}}(w) dP_{\mathbb{D}}(w) + \int_{B} X_{\mathbb{D}}(w) dP_{\mathbb{D}}(w)\right].\\
\end{split}
\end{equation*}
Therefore $$P_{\mathbb{D}}(A \cup B) E_{A\cup B}(X_{\mathbb{D}})=  P_{\mathbb{D}}(A)  E_{A}(X_{\mathbb{D}}) + P_{\mathbb{D}}(B)  E_{B}(X_{\mathbb{D}}).$$ Hence \eqref{eq1} follows.
\item[(d)] If $ P_{\mathbb{D}}(A \cup B) = \lambda_{2}\; e^{\dagger} $ with $ \lambda_{2}>0,$ the proof follows analogously. 
\end{enumerate}
\end{proof}
% % % % % % % % % % % % % % % % % % % % % % % % % % % % % % % % % % % % % % % % % % % % % % % % % % % % % % % % % % % % % % % % % % % % % % % % % % % % % % % % % % % % % % % % % % % % % % % %
The concept of conditional expectation can be extended if the conditioning is not just for one event, but for a countable collection of events, say $ \left\lbrace A_{n}/ n\in \mathbb{N} \right\rbrace.$
Let $(\Omega,\Sigma,P_{\mathbb{D}})$ be a $\mathbb{D}$-probabilistic space and $ \mathbb{P} = \left\lbrace A_{n}/ n\in \mathbb{N} \right\rbrace $ be a partition of $ \Omega $ with positive probability of each member. Then $ \left\lbrace P_{\mathbb{D}}(./A_{n}) / n\in \mathbb{N}\right\rbrace $ is a family of conditional probability functions on $ \Sigma $ and the conditional expectation of $\mathbb{D}$-random variable $ X_{\mathbb{D}},$ with $ E(|X_{\mathbb{D}}|_{k})\prec \infty, $ given a partition $ \mathbb{P}$ is a countably valued function denoted by $ E_{\mathbb{P}}(X_{\mathbb{D}}),$ with values $ \left\lbrace E_{A_{n}}(X_{\mathbb{D}})/ n \in \mathbb{N} \right\rbrace $ given by $$ E_{\mathbb{P}}(X_{\mathbb{D}}) = \sum_{n=1}^{\infty} E_{A_{n}}(X_{\mathbb{D}}) \chi_{A_{n}},$$ where $ \chi_{A}$ is the indicator function of A.\\Let $\mathcal{B}$ be the smallest $\sigma$-algebra containing the partition $\mathbb{P}.$ Then the generators of $\mathcal{B}$ are of the form $ \cup_{k\in J}A_{k}$ for subsets $J\subset \mathbb{N},A_{k} \in\mathbb{P}.$ Since $ E_{\mathbb{P}}(X_{\mathbb{D}})$ is a $ \mathbb{D} $- random variable relative to $ \mathcal{B},$we can integrate it on each generator A of $\mathcal{B}$ to obtain
\begin{equation}\label{eqn3} \int_{A}E_{\mathbb{P}}(X_{\mathbb{D}})dP_{\mathbb{D}} =  \int_{A}X_{\mathbb{D}}\;dP_{\mathbb{D}}.
\end{equation}
Let us consider the different cases that arise.\\
\begin{enumerate}
\item[(a)] If $P_{\mathbb{D}}(A_{n})\notin \mathfrak{S}_{\mathbb{D}},\forall n,$then
\begin{equation*}
\begin{split}
\int_{A}E_{\mathbb{P}}(X_{\mathbb{D}})dP_{\mathbb{D}}&= \sum_{n=1}^{\infty} E_{A_{n}}(X_{\mathbb{D}})   \int_{A}\chi_{A_{n}}dP_{\mathbb{D}}\\
&= \sum_{n\in J} \int_{A_{n}} X_{\mathbb{D}}\;dP_{\mathbb{D}}. \frac{P_{\mathbb{D}}(A\cap A_{n}}{P_{\mathbb{D}}(A_{n})}\\
&= \sum_{n\in J} \int_{A_{n}} X_{\mathbb{D}}\;dP_{\mathbb{D}},\;\text{since}\;  A_{n} \subset A \; \text{for}\;  n \in J.\\
&= \int_{A}X_{\mathbb{D}}\;dP_{\mathbb{D}}\\
\end{split}
\end{equation*}
\item[(b)] If $P_{\mathbb{D}}(A_{n_{0}})\notin \mathfrak{S}_{\mathbb{D}},$ for some $ n_{0} \in \mathbb{N},$ then
$P_{\mathbb{D}}(A_{n})=\lambda_{n}e$ or $P_{\mathbb{D}}(A_{n})=\mu_{n}e^{\dagger},\forall n\neq n_{0}.$\\
For $P_{\mathbb{D}}(A_{n})=\lambda_{n}e,$ for all $n\neq n_{0},$ we have $$E_{A_{n}}(X_{\mathbb{D}})=\int_{A_{n}} \frac{X_{\mathbb{D}}(w)dP_{\mathbb{D}}(w)}{\lambda_{n}} e + E(X_{\mathbb{D}})e^{\dagger}.$$
Therefore,
\begin{equation*}
\begin{split}
& \int_{A}E_{\mathbb{P}}(X_{\mathbb{D}})dP_{\mathbb{D}}\\
&= \sum_{n=1}^{\infty} E_{A_{n}}(X_{\mathbb{D}})   \int_{A}\chi_{A_{n}}dP_{\mathbb{D}}\\
&= \sum_{n=1}^{\infty} E_{A_{n}}(X_{\mathbb{D}})P_{\mathbb{D}}(A \cap A_{n})\\
&= \sum_{n=1}^{\infty} E_{A_{n}}(X_{\mathbb{D}})P_{\mathbb{D}}( A_{n})\\
&= \sum_{n\in J-\left\lbrace n_{0}\right\rbrace} E_{A_{n}}(X_{\mathbb{D}})P_{\mathbb{D}}( A_{n}) + E_{A_{n_{0}}}(X_{\mathbb{D}})P_{\mathbb{D}}( A_{n_{0}})\\
&= \sum_{n\in J-\left\lbrace n_{0}\right\rbrace} \left[\int_{A_{n}} \frac{X_{\mathbb{D}}(w)dP_{\mathbb{D}}(w)}{\lambda_{n}} e + E(X_{\mathbb{D}})e^{\dagger} \right]\lambda_{n}e \\
& ~~~~~~~~~~~ + \int_{A_{n_{0}}} X_{\mathbb{D}}(w)dP_{\mathbb{D}}(w)\\
\end{split}
\end{equation*} 

\begin{equation*}
\begin{split}
&= \sum_{n\in J-\left\lbrace n_{0}\right\rbrace} e \int_{A_{n}} X_{\mathbb{D}}(w)dP_{\mathbb{D}}(w) +  \int_{A_{n_{0}}} X_{\mathbb{D}}(w)dP_{\mathbb{D}}(w)\\
&= \sum_{n\in J-\left\lbrace n_{0}\right\rbrace} \left[ e \int_{A_{n}} X_{1}(w)dP_{1}(w) + e^{\dagger}.0 \right] \\
& ~~~~~~~~~~~ + \left[e \int_{A_{n_{0}}} X_{1}(w)dP_{1}(w) + e^{\dagger} \int_{A_{n_{0}}} X_{2}(w)dP_{2}(w) \right]\\
&= \sum_{n\in J} \left[ e \int_{A_{n}} X_{1}(w)dP_{1}(w) + e^{\dagger} \int_{A_{n}} X_{2}(w)dP_{2}(w)\right]\\
&= \sum_{n\in J} \int_{A_{n}} X_{\mathbb{D}}(w)dP_{\mathbb{D}}(w)\\
&= \int_{A}X_{\mathbb{D}}\;dP_{\mathbb{D}}\\
\end{split}
\end{equation*} 
The proof follows analogously for the other case.
\item[(c)]
If $P_{\mathbb{D}}(A_{n})\in \mathfrak{S}_{\mathbb{D}},$ for all $ n \in \mathbb{N},$ then
$P_{\mathbb{D}}(A_{n})=\lambda_{n}e$ or $P_{\mathbb{D}}(A_{n})=\mu_{n}e^{\dagger},\forall n.$\\
For $P_{\mathbb{D}}(A_{n})=\lambda_{n}e,$ for all n, we have $$E_{A_{n}}(X_{\mathbb{D}})=\int_{A_{n}} \frac{X_{\mathbb{D}}(w)dP_{\mathbb{D}}(w)}{\lambda_{n}} e + E(X_{\mathbb{D}})e^{\dagger}$$
Therefore,
\begin{equation*}
\begin{split}
\int_{A}E_{\mathbb{P}}(X_{\mathbb{D}})dP_{\mathbb{D}}&= \sum_{n=1}^{\infty} E_{A_{n}}(X_{\mathbb{D}})   \int_{A}\chi_{A_{n}}dP_{\mathbb{D}}\\
&= \sum_{n=1}^{\infty} E_{A_{n}}(X_{\mathbb{D}})   P_{\mathbb{D}}(A_{n})\\
&= \sum_{n=1}^{\infty}\left[\int_{A_{n}} \frac{X_{\mathbb{D}}(w)dP_{\mathbb{D}}(w)}{\lambda_{n}} e + E(X_{\mathbb{D}})e^{\dagger}\right]\lambda_{n}e\\
&=  \sum_{n=1}^{\infty}e \int_{A_{n}} X_{\mathbb{D}}(w)dP_{\mathbb{D}}(w)\\
&=  \sum_{n=1}^{\infty}\left[e \int_{A_{n}} X_{1}(w)dP_{1}(w) + 0\right]\\
&=  \sum_{n=1}^{\infty} \left[e \int_{A_{n}} X_{1}(w)dP_{1}(w) + e^{\dagger} \int_{A_{n}} X_{2}(w)dP_{2}(w)\right]\\
&=  \int_{A} X_{\mathbb{D}}\;dP_{\mathbb{D}}.\\
\end{split}
\end{equation*}  
The proof is analogous for the other case.\\
Hence $\int_{A}E_{\mathbb{P}}(X_{\mathbb{D}})dP_{\mathbb{D}} =  \int_{A}X_{\mathbb{D}}\;dP_{\mathbb{D}}.$
\end{enumerate}
\begin{theorem}
Let $ X_{\mathbb{D}}$ be a $\mathbb{D}$-random variable on a $\mathbb{D}$-probabilistic space $(\Omega,\Sigma,P_{\mathbb{D}})$ and $\mathbb{P} \subset \Sigma $ be a partition of $\Omega$ (of positive probability of each member).If $\mathbb{P}=\left\lbrace A_{\imath}/\imath \in \mathbb{N} \right\rbrace, $ then the existence of $ E_{\mathbb{P}}(X_{\mathbb{D}})$ implies that $ E_{\mathcal{A}}(X_{\mathbb{D}})$ exists for each sub partition $\mathcal{A}= \left\lbrace A_{\imath_{n}}/n \in \mathbb{N} \right\rbrace \subset \mathbb{P} $ and one has \\
\begin{equation}\label{eqn4}
 E_{A}(X_{\mathbb{D}}) P_{\mathbb{D}}(A) =   \int_{A}E_{\mathbb{P}}(X_{\mathbb{D}})dP_{\mathbb{D}}= P_{\mathbb{D}}(A) \sum_{n=1}^{\infty} E_{A_{\imath_{n}}}(X_{\mathbb{D}})P_{\mathbb{D}}(A_{\imath_{n}}/A),
 \end{equation} where $ A= \cup_{n=1}^{\infty}A_{\imath_{n}}.$ 
\end{theorem}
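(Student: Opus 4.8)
The plan is to reduce every term in \eqref{eqn4} to the single integral $\int_A X_{\mathbb{D}}\,dP_{\mathbb{D}}$, using the already-proved identity \eqref{eqn3} for the middle term and the defining branches of conditional expectation for the outer ones, and then to clear the zero-divisor possibilities for $P_{\mathbb{D}}(A)$ case by case. Existence is immediate: since $E_{\mathbb{P}}(X_{\mathbb{D}})$ exists one has $E(|X_{\mathbb{D}}|_k)\prec\infty$ and each $E_{A_m}(X_{\mathbb{D}})$ is defined, and as every member $A_{\imath_n}$ of the sub-partition lies in $\mathbb{P}$, each $E_{A_{\imath_n}}(X_{\mathbb{D}})$ exists; hence $E_{\mathcal{A}}(X_{\mathbb{D}})=\sum_n E_{A_{\imath_n}}(X_{\mathbb{D}})\chi_{A_{\imath_n}}$ is well defined. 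Moreover $A=\cup_n A_{\imath_n}\in\Sigma$ and $X_{\mathbb{D}}$ is $\mathbb{D}$-Lebesgue integrable, so $\int_A X_{\mathbb{D}}\,dP_{\mathbb{D}}$ exists and $E_A(X_{\mathbb{D}})$ is given by the appropriate branch of its definition.

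For the first equality I would invoke \eqref{eqn3} --- valid because $A$ is exactly a generator $\cup_{k\in J}A_k$ of $\mathcal{B}$ --- to rewrite $\int_A E_{\mathbb{P}}(X_{\mathbb{D}})\,dP_{\mathbb{D}}=\int_A X_{\mathbb{D}}\,dP_{\mathbb{D}}$, and then check $E_A(X_{\mathbb{D}})\,P_{\mathbb{D}}(A)=\int_A X_{\mathbb{D}}\,dP_{\mathbb{D}}$. If $P_{\mathbb{D}}(A)\notin\mathfrak{S}_{\mathbb{D}}$ this is immediate from $E_A(X_{\mathbb{D}})=P_{\mathbb{D}}(A)^{-1}\int_A X_{\mathbb{D}}\,dP_{\mathbb{D}}$. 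If $P_{\mathbb{D}}(A)=\lambda_1 e$, I would multiply the relevant branch by $\lambda_1 e$ and use $e^2=e,\ e e^{\dagger}=0$ to reach $e\int_A X_{\mathbb{D}}\,dP_{\mathbb{D}}$; the decisive observation is that $P_{\mathbb{D}}(A)=\lambda_1 e$ forces $P_2(A)=0$, so $\int_A X_2\,dP_2=0$ and $e\int_A X_{\mathbb{D}}\,dP_{\mathbb{D}}=\int_A X_{\mathbb{D}}\,dP_{\mathbb{D}}$. The case $P_{\mathbb{D}}(A)=\lambda_2 e^{\dagger}$ is symmetric.

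For the second equality I would first evaluate the left side directly: inserting $E_{\mathbb{P}}(X_{\mathbb{D}})=\sum_m E_{A_m}(X_{\mathbb{D}})\chi_{A_m}$ and using disjointness of the $A_m$ together with $A=\cup_n A_{\imath_n}$ gives $\int_A E_{\mathbb{P}}(X_{\mathbb{D}})\,dP_{\mathbb{D}}=\sum_n E_{A_{\imath_n}}(X_{\mathbb{D}})\,P_{\mathbb{D}}(A_{\imath_n})$. Pulling the constant $P_{\mathbb{D}}(A)$ through the sum on the right side of \eqref{eqn4} and commuting it past $E_{A_{\imath_n}}(X_{\mathbb{D}})$, the whole equality reduces to the single identity $P_{\mathbb{D}}(A)\,P_{\mathbb{D}}(A_{\imath_n}/A)=P_{\mathbb{D}}(A_{\imath_n})$ for every $n$. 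I would read this off Definition \ref{def12}: for invertible $P_{\mathbb{D}}(A)$ it is literally $P_{\mathbb{D}}(A)\cdot P_{\mathbb{D}}(A_{\imath_n})/P_{\mathbb{D}}(A)$, while for $P_{\mathbb{D}}(A)=\lambda_1 e$ the idempotent arithmetic gives $e\,P_{\mathbb{D}}(A_{\imath_n})$, equal to $P_{\mathbb{D}}(A_{\imath_n})$ because $P_2(A)=0$ and $A_{\imath_n}\subset A$ force $P_2(A_{\imath_n})=0$, i.e. $P_{\mathbb{D}}(A_{\imath_n})$ sits in the $e$-component.

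The routine parts are the idempotent bookkeeping and the interchange of sum and integral, justified by $\mathbb{D}$-integrability and disjointness of the $A_m$. The main obstacle, and essentially the only place any real content enters, is the zero-divisor branch: one must see that $P_{\mathbb{D}}(A)=\lambda_1 e$ annihilates the $e^{\dagger}$-halves of every object in play --- the stray $E(X_{\mathbb{D}})e^{\dagger}$ terms in the definition of $E_A$, the $P_2$-integrals, and the $e^{\dagger}$-part of $P_{\mathbb{D}}(A_{\imath_n})$ --- so that all three expressions collapse onto their $e$-components and coincide. Making this cancellation line up simultaneously across the two equalities is the crux.
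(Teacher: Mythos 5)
Your proposal is correct and follows essentially the same route as the paper: both pass through \eqref{eqn3} to identify the middle term with $\int_A X_{\mathbb{D}}\,dP_{\mathbb{D}}$, and both settle the zero-divisor branches by observing that $P_{\mathbb{D}}(A)=\lambda_1 e$ forces $P_2(A)=0$, hence $P_2(A_{\imath_n})=0$ for every $A_{\imath_n}\subset A$, so all $e^{\dagger}$-components vanish and the three expressions collapse onto their $e$-parts. Your reduction of the second equality to the single termwise identity $P_{\mathbb{D}}(A)\,P_{\mathbb{D}}(A_{\imath_n}/A)=P_{\mathbb{D}}(A_{\imath_n})$ is merely a tidier (and, in the zero-divisor case, more scrupulous) packaging of the same case-by-case computations the paper performs, which at one point divides by $P_{\mathbb{D}}(A)$ even when it is a zero divisor.
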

\begin{proof}
we have $ E_{\mathbb{P}}(X_{\mathbb{D}})(w) = E_{A_{n}}(X_{\mathbb{D}})(w),w\in A_{n},n\in \mathbb{N}. $
Therefore existence of $ E_{\mathbb{P}}(X_{\mathbb{D}})$ implies that of $ E_{A_{n}}(X_{\mathbb{D}}), n \in \mathbb{N}$  and hence of $ E_{A}(X_{\mathbb{D}})$ or $ E_{\mathcal{A}}(X_{\mathbb{D}}).$Also by \ref{eqn3}, we have $ \int_{A}E_{\mathbb{P}}(X_{\mathbb{D}})dP_{\mathbb{D}}= \int_{A}X_{\mathbb{D}}\;dP_{\mathbb{D}} = P_{\mathbb{D}}(A)E_{A}(X_{\mathbb{D}}).$ There arise three cases.\\
\begin{enumerate}
\item[(a)] If $ P_{\mathbb{D}}(A) \notin \mathfrak{S}_{\mathbb{D}},$ then $ P_{\mathbb{D}}(A_{\imath_{n}}) \notin \mathfrak{S}_{\mathbb{D}},\forall \; n \; \text{or} \; P_{\mathbb{D}}(A_{\imath_{n_{0}}}) \notin \mathfrak{S}_{\mathbb{D}}$ for some $ n_{0}.$ 
\subitem(i) $ P_{\mathbb{D}}(A_{\imath_{n}}) \notin \mathfrak{S}_{\mathbb{D}},\forall \; n.$\\
 In this case,\\
\begin{equation*}
\begin{split}
\int_{A}X_{\mathbb{D}}\;dP_{\mathbb{D}}
&= \sum_{n=1}^{\infty} \int_{A_{\imath_{n}}} X_{\mathbb{D}} dP_{\mathbb{D}}\\
&= \sum_{n=1}^{\infty} E_{A_{\imath_{n}}}(X_{\mathbb{D}})P_{\mathbb{D}}(A_{\imath_{n}})\\ 
&= \sum_{n=1}^{\infty} E_{A_{\imath_{n}}}(X_{\mathbb{D}})P_{\mathbb{D}}(A_{\imath_{n}}/A)P_{\mathbb{D}}(A),\\
\end{split}
\end{equation*}
hence \ref{eqn4} holds.
\subitem(ii) If $ P_{\mathbb{D}}(A_{\imath_{n_{0}}}) \notin \mathfrak{S}_{\mathbb{D}},$ for some $ n_{0} \in \mathbb{N}, $ then $ P_{\mathbb{D}}(A_{\imath_{n}})=\lambda_{n} e$ or $ P_{\mathbb{D}}(A_{\imath_{n}})=\mu_{n} e^{\dagger}, \forall  n \neq n_{0}.$ 
If $ P_{\mathbb{D}}(A_{\imath_{n}})=\lambda_{n} e,$ then $ P_{1}(A_{\imath_{n}})=\lambda_{n} e$ and $ P_{2}(A_{\imath_{n}})=0,\forall\; n\neq n_{0}.$ In this case,
\begin{equation*}
\begin{split}
P_{\mathbb{D}}(A) \sum_{n=1}^{\infty} E_{A_{\imath_{n}}}(X_{\mathbb{D}})P_{\mathbb{D}}(A_{\imath_{n}}/A)
& = P_{\mathbb{D}}(A) \sum_{n=1}^{\infty} E_{A_{\imath_{n}}}(X_{\mathbb{D}})\frac {P_{\mathbb{D}}(A_{\imath_{n}} \cap A)}{P_{\mathbb{D}}(A)}\\
& =P_{\mathbb{D}}(A) \sum_{n=1}^{\infty} E_{A_{\imath_{n}}}(X_{\mathbb{D}})\frac {P_{\mathbb{D}}(A_{\imath_{n}})} {P_{\mathbb{D}}(A)}\\
& = \sum_{n=1}^{\infty} E_{A_{\imath_{n}}}(X_{\mathbb{D}})P_{\mathbb{D}}(A_{\imath_{n}})\\
& = \sum_{n=1,n\neq n_{0}}^{\infty} E_{A_{\imath_{n}}}(X_{\mathbb{D}})P_{\mathbb{D}}(A_{\imath_{n}} ) + E_{A_{\imath_{n_{0}}}}(X_{\mathbb{D}})P_{\mathbb{D}}(A_{\imath_{n_{0}}}) \\
& = \sum_{n=1,n\neq n_{0}}^{\infty}\left[\int_{A_{\imath_{n}}} \frac {X_{\mathbb{D}} dP_{\mathbb{D}}}{\lambda_{n}} e + E_{X_{\mathbb{D}}} e^{\dagger}\right]  + \int_{A_{\imath_{n_{0}}}} X_{\mathbb{D}} dP_{\mathbb{D}} \\
&= \left[\sum_{n=1,n\neq n_{0}}^{\infty} e \int_{A_{\imath_{n}}} X_{\mathbb{D}} dP_{\mathbb{D}}+ \int_{A_{\imath_{n_{0}}}} X_{\mathbb{D}} dP_{\mathbb{D}}  \right]\\
&= \left[\sum_{n=1,n\neq n_{0}}^{\infty}  \int_{A_{\imath_{n}}} X_{\mathbb{D}} dP_{\mathbb{D}}\right]\\
&= \int_{A} X_{\mathbb{D}} dP_{\mathbb{D}},\\
\end{split}
\end{equation*}
because $ \int_{A_{\imath_{n}}} X_{\mathbb{D}} dP_{\mathbb{D}}= e \int_{A_{\imath_{n}}} X_{1} dP_{1} +  e^{\dagger} \int_{A_{\imath_{n}}} X_{2} dP_{2}=e \int_{A_{\imath_{n}}} X_{1} dP_{1}=e \int_{A_{\imath_{n}}} X_{\mathbb{D}} dP_{\mathbb{D}}$ as $ P_{2}(A_{\imath_{n}})=0, \forall n\neq n_{0}.$ \\
If $ P_{2}(A_{\imath_{n}})=\mu_{n} e^{\dagger}$ and $ P_{1}(A_{\imath_{n}})=0, \forall n\neq n_{0},$ the proof is analogous.
\item[(b)] If $ P_{\mathbb{D}}(A)=\lambda e,\lambda>0,$ then $ P_{\mathbb{D}}(A_{\imath_{n}})=\lambda_{n}e,\forall n\in \mathbb{N},$ where $ \lambda = \sum_{n=1}^{\infty}\lambda_{n}.$\\Therefore
$$ \int_{A} X_{\mathbb{D}} dP_{\mathbb{D}}= \sum_{n=1}^{\infty}  \int_{A_{\imath_{n}}} X_{\mathbb{D}} dP_{\mathbb{D}}= e \sum_{n=1}^{\infty}  \int_{A_{\imath_{n}}} X_{\mathbb{D}} dP_{\mathbb{D}} = e \int_{A} X_{\mathbb{D}} dP_{\mathbb{D}} $$ as $P_{2}(A_{\imath_{n}})=0,\forall n\in \mathbb{N}.$\\and
\begin{equation*}
\begin{split}
P_{\mathbb{D}}(A) \sum_{n=1}^{\infty} E_{A_{\imath_{n}}}(X_{\mathbb{D}})P_{\mathbb{D}}(A_{\imath_{n}}/A)\\
&= P_{\mathbb{D}}(A) \sum_{n=1}^{\infty} E_{A_{\imath_{n}}}(X_{\mathbb{D}})\frac {P_{\mathbb{D}}(A_{\imath_{n}} \cap A)} {P_{\mathbb{D}}(A)}\\
&= \sum_{n=1}^{\infty} E_{A_{\imath_{n}}}(X_{\mathbb{D}})P_{\mathbb{D}}(A_{\imath_{n}})\\
& = \sum_{n=1}^{\infty}\left[\int_{A_{\imath_{n}}} \frac {X_{\mathbb{D}} dP_{\mathbb{D}}}{\lambda_{n}} e + E({X_{\mathbb{D}}}) e^{\dagger}\right] \lambda_{n}e\\
&= \sum_{n=1}^{\infty}e \int_{A_{\imath_{n}}} X_{\mathbb{D}} dP_{\mathbb{D}}\\
&= e \sum_{n=1}^{\infty} \int_{A_{\imath_{n}}} X_{\mathbb{D}} dP_{\mathbb{D}}\\
&= \int_{A} X_{\mathbb{D}} dP_{\mathbb{D}}.\\
\end{split}
\end{equation*}
On the other hand $$ E_{A}(X_{\mathbb{D}})P_{\mathbb{D}}(A) =\left[ \int_{A} \frac{X_{\mathbb{D}} dP_{\mathbb{D}}}{\lambda} e + E(X_{\mathbb{D}}) e^{\dagger}\right] \lambda e =e \int _{A}X_{\mathbb{D}} dP_{\mathbb{D}}= \int _{A}X_{\mathbb{D}} dP_{\mathbb{D}}.$$ 
\item[(c)] The proof is analogous for the case when $ P_{\mathbb{D}}(A)=\mu e^{\dagger}$ with $\mu>0.$
\end{enumerate}
Hence $\ref{eqn4}$ holds for all the cases.
\end{proof}
Formula \ref{eqn4} is useful in calculating $ E_{A}(X_{\mathbb{D}})$ when individual $ E_{A_{n}}(X_{\mathbb{D}})$'s are known.  
The whole theory of probability can be generalized for $\mathbb{D}$-random variables on the similar lines.
\end{section}
%%%%%%%%%%%%%%%%%%%%%%%%%%%%%%%%%%%%%%%%%
%%%%%%%%%%%%%%%%%%%%%%%%%%%%%%%%%%%%%%%%
%%%%%%%%%%%%%%%%%%%%%%%%%%%%%%%%%%%%%%%
\bibliographystyle{amsplain}
 
\noindent Romesh Kumar, \textit{Department of Mathematics, University of Jammu, Jammu, J\&K - 180 006, India.}\\
E-mail :\textit{ romesh\_jammu@yahoo.com}\\
\noindent Kailash Sharma, \textit{Department of Mathematics, University of Jammu, Jammu,  J\&K - 180 006, India.}\\
E-mail :\textit{ kailash.maths@gmail.com}\\
\end{document}